\renewcommand{\div}{\rm div}
\newtheorem*{theorem-non}{Theorem}
\newtheorem*{lemma-non}{Lemma}
\newtheorem{theorem}{Theorem}[section]
\newtheorem{lemma}[theorem]{Lemma}
\newtheorem{corollary}[theorem]{Corollary}
\numberwithin{equation}{section}
\theoremstyle{definition}
\newtheorem{definition}[theorem]{Definition}
\newtheorem{example}[theorem]{Examples}
\newtheorem{remark}[theorem]{Remark}
\newtheorem{theorem*}[theorem]{Theorem}
\def\O{\Omega}
\def\C{\mathcal{C}}
\def\div{{\rm div}\,}
\def\supp{{\rm supp}}
\def\dx{{\,\rm d}x}
\def\dy{{\,\rm d}y}
\def\v{{\bf v}}
\def\gve{\varepsilon}
\newcommand{\R}{{\mathbb R}}
\newcommand{\N}{{\mathbb N}}
\DeclareMathOperator*{\esssup}{ess\,sup}
\DeclareMathOperator*{\essinf}{ess\,inf}
\begin{document}
\date{\today}
%\section*{}
\title[Discrete Hardy inequalities on trees and applications]{Weighted discrete Hardy inequalities on trees and applications}

\author[F. L\'opez-Garc\'\i a]{Fernando L\'opez-Garc\'\i a}
\address{Department of Mathematics and Statistics\\ California State Polytechnic University Pomona\\
3801 West Temple Avenue\\ Pomona, CA 91768, US} 
\email{fal@cpp.edu}

\author[I. Ojea]{Ignacio Ojea}
\address{Departamento de Matem\'atica, Facultad de Ciencias Exactas y Naturales, Universidad de Buenos Aires and IMAS, CONICET, 1428 Buenos Aires, Argentina} 
\email{iojea@dm.uba.ar}

\thanks{The first author is supported by Cal Poly Pomona under start-up funds. The second author is supported by ANCyPT under grant PICT 2018 - 3017, by CONICET under grant PIP112201130100184CO and by Universidad de Buenos Aires under grant 20020170100056BA}

\keywords{Discrete Hardy inequality, Decomposition of Functions, Weights, Trees, Distance, H\"older-$\alpha$ Domains, Divergence Equation, Korn's inequality, Poincar\'e-type Inequalities}

\subjclass[2020]{Primary: 26D10, Secondary: 35A23, 46E35}

\begin{abstract}

 In this paper, we study certain inequalities and a related result for weighted Sobolev spaces on H\"older-$\alpha$ domains, where the weights are powers of the distance to the boundary. We obtain results regarding the divergence equation's solvability, and the improved Poincar\'e, the fractional Poincar\'e, and the Korn inequalities. The proofs are based on a local-to-global argument that involves a kind of atomic decomposition of functions and the validity of a weighted discrete Hardy-type inequality on trees. The novelty of our approach lies in the use of this weighted discrete Hardy inequality and a sufficient condition that allows us to study the weights of our interest. As a consequence, the assumptions on the weight exponents that appear in our results are weaker than those in the literature.

\keywords{Discrete Hardy inequality, Decomposition of Functions, Weights, Trees, Distance, H\"older-$\alpha$ Domains, Divergence Equation, Korn's inequality, Poincar\'e-type Inequalities}

\subjclass{Primary: 26D10, Secondary: 35A23, 46E35}
\end{abstract}
\maketitle

\section{Introduction}
\label{Intro}

Let $\{U_t\}_{t\in\Gamma}$ be a certain partition of a bounded domain $\Omega\subset\R^n$. Given $f\in L^1(\Omega)$ with vanishing mean value, we decompose it into the sum of a collection of functions $\{f_t\}_{t\in\Gamma}$, where $f_t$ is supported on $U_t$ and has vanishing mean value. This kind of decomposition was applied by Bogovskii in \cite{Bogovskii} using a finite partition to extend the solvability of the divergence equation from star-shaped domains with respect to a ball to Lipschitz domains. In the articles \cite{DRS} and \cite{DMRT}, the authors used a similar decomposition where the partition of the domain is countable. In the case where the partition is not finite, it is required to have an upper bound of the sum of the norms of $\{f_t\}_{t\in\Gamma}$ by the norm of the function $f$. In \cite{DRS}, the decomposition is developed for John domains, and the estimation of the norms is based on the continuity of the Hardy-Littlewood maximal operator. In \cite{DMRT}, the authors considered more general domains and the decomposition is based on the validity of a certain Poincar\'e-type inequality. This decomposition can be used for extending to general domains several results that are known to hold on simpler ones, e.g.: the solvability of the divergence equation, and the inequalities Poincar\'e, improved Poincar\'e, fractional  Poincar\'e and Korn. The decomposition presented here is based on the one developed in \cite{L1} where a continuous Hardy-type inequality is applied for proving the estimation for the norms. Moreover, in \cite{L1} the partition of the domain is indexed over a set $\Gamma$ with tree structure, which is strongly related to the geometry of the domain. Other references where variations of these techniques are used are: \cite{D,L2,L3,HL,DL}. 

Also, in \cite{AO,BLV} a similar decomposition is used on cuspidal domains for proving weighted Korn inequalities. In those papers, thanks to the geometry of the domain, the partition is indexed over $\N$ (in other words, it is formed by a chain of subdomains). The discrete weighted Hardy-type inequality \cite[inequality (1.102), page 56]{KPS} is used for proving the estimate of the norms. 

In this work, we are interested in having a better understanding of the weights that make these inequalities valid. We apply a discrete approach, similar to the one used in \cite{AO,BLV}, i.e.: our partition of the domain allows us to regard the weights as essentially constant over each sub-domain and a discrete Hardy-type inequality is used for estimating a weighted norm of the sum of $\{f_t\}$ in terms of another weighted norm of $f$. On the other hand, we recover the tree structure introduced in \cite{L1}, which allows the method to be applied to a larger class of domains. Hence, we need a discrete weighted Hardy-type inequality, similar to \cite[inequality (1.102)]{KPS}, but for sequences indexed over trees. For this inequality to hold, necessary and sufficient conditions on the weights can be derived from the continuous case, treated in \cite{EHP}. However, as we shall discuss below, these conditions are very hard to check for our examples. Hence, we prove a sufficient condition which is somehow a \emph{natural} extension of the classical condition for sequences and is much easier to verify.

The paper is organized as follows: Section 2 introduces the weighted discrete Hardy-type inequality that is applied later, and provides two conditions on the weights that imply its validity. In Section 3, we present our decomposition of functions with vanishing mean value on arbitrary bounded domains. We also show how the Hardy-type inequality stated in the previous section can be used to obtain an upper bound of the norms of the functions proposed in the decomposition. In Section 4, we study the decomposition of functions defined in Section 3 on bounded H\"older domains. In Section 5, we prove several interesting results that are obtained as a consequence of the decomposition. In particular, we prove the solvability of the divergence equation and improved Poincar\'e, fractional Poincar\'e and Korn inequalities. All these results are stated on weighted Sobolev spaces on bounded H\"older domains, where the weights are powers of the distance to the boundary. In all cases, the conditions imposed on the exponents of the weights are less restrictive than the ones in the literature.
In Appendix A, we derive from \cite{EHP} a necessary and sufficient condition for the validity of the weighted discrete Hardy-type inequality treated in this work. This condition is included in the manuscript for general knowledge, but it is not used in our applications.

\section{A weighted discrete Hardy inequality on trees}
\label{Hardy on trees}

In this section, we study a certain weighted Hardy-type inequality on trees, and give two conditions for its validity. The first condition is sufficient and necessary and it follows from \cite{EHP} (see Theorem \ref{ehp theorem}). The second condition is sufficient, and it may also be necessary, but we haven't proven it. We are especially interested in this second one because its verification in our examples is easier than the first one. 

Throughout the paper $1< p,q<\infty$, with $\frac{1}{p}+\frac{1}{q}=1$, unless otherwise stated. 

A tree is a graph $(V,E)$, where $V$ is the set of vertices and $E$ the set of edges, satisfying that it is connected and has no cycles. A tree is said to be rooted if one vertex is designed as root. In a rooted tree $(V,E)$, it is possible to define a {\it partial order} ``$\preceq$" in $V$ as follows: $s\preceq t$ if and only if the unique path connecting $t$ to the root $a$ passes through $s$. {\it The parent} $t_p$ of a vertex $t$ is the vertex connected to $t$ by an edge on the path to the root. It can be seen that each $t\in V$ different from the root has a unique parent, but several elements ({\it children}) on $V$ could have the same parent. We assume that each vertex has a finite number of children. Note that two vertices are connected by an edge ({\it adjacent vertices}) if one is the parent of the other one. We say that a set of indices $\Gamma$ has a tree structure if there is a set of edges such that $(\Gamma,E)$ is a rooted tree. 

Trees can be regarded as continuous or as discrete. In a continuous tree, the edges are segments on the plane, and one can define functions taking values over them, whereas the set of vertices has vanishing measure. On the other hand, on discrete trees, the edges are just links between the vertices that define a partial order. In this case, sequences indexed on the vertices can be defined. There is a natural one to one map between the edges and the subset of vertices $\Gamma\setminus\{a\}$. It is given by the association of the edge $(t_p,t)$ with the vertex $t$. This map implies an association between the continuous and  discrete versions of a given tree. Therefore, we define:
\[\Gamma^* = \Gamma\setminus\{a\}.\]
We will work with discrete trees which are derived from a continuous setting, so $\Gamma^*$ is the natural environment for stating our Hardy-type inequality. It is important to notice, however, that the same results that we present here on $\Gamma^*$ can be easily extended to $\Gamma$.

Given a rooted tree $\Gamma$, we consider collections of real values indexed over $\Gamma^*$, named in this work as $\Gamma^*$-sequences.  We define $\ell^p(\Gamma^*)$ the space of collections ${\bm b} = \{b_t\}_{t\in\Gamma^*}$ such that:
\[\|{\bm b}\|_p= \left(\sum_{t\in\Gamma^*}b_t^p\right)^\frac{1}{p}<\infty.\]
We also define $\mathcal{P}_t$ the path from the root $a$ to $t$: \[\mathcal{P}_t:=\{s:a\prec s\preceq t\},\]
and $\mathcal{S}_t$ the \emph{shadow} of $t$: \[\mathcal{S}_t:=\{s\in\Gamma^*:\, s\succeq t\}.\]

Given positive $\Gamma^*$-sequences (i.e. weights) ${\bm u} = \{u_t\}_{t\in\Gamma^*}$ and ${\bm v} = \{v_t\}_{t\in\Gamma^*}$, we introduce the inequality: 
\begin{equation}\label{dHardy dual general}
\left(\sum_{t\in\Gamma^*}\bigg|u_t^{-1}\sum_{s\succeq t} b_s\bigg|^q\right)^\frac{1}{q}\le C\left(\sum_{t\in\Gamma^*}\left|b_t v_t^{-1}\right|^q\right)^{\frac{1}{q}},
\end{equation}
for every ${\bm b}=\{b_t\}_{t\in\Gamma}$ such that ${\bm b}{\bm v}^{-1}\in \ell^q(\Gamma^*).$

Notice that the following dual version to \eqref{dHardy dual general} is equivalent. 
\begin{lemma}
Inequality \eqref{dHardy dual general} holds if and only if
\begin{equation}\label{dHardy primal}
\left(\sum_{s\in\Gamma^*} \left|v_s\sum_{a\prec t\preceq s} d_t\right|^p\right)^\frac{1}{p} \le C \left(\sum_{s\in\Gamma^*} \left|d_s u_s\right|^p\right)^{\frac{1}{p}}
\end{equation}
 is satisfied, for every ${\bm d}=\{d_t\}_{t\in\Gamma}$ such that ${\bm d}{\bm u}\in \ell^p(\Gamma^*)$. Moreover, the optimal constants for both inequalities are equal to each other.
\end{lemma}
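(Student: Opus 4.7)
The plan is to use the standard duality between $\ell^q$ and $\ell^p$ combined with Fubini's theorem (interchange of summations). The key combinatorial identity is that, for $s,t\in\Gamma^*$, one has $s\succeq t \Leftrightarrow t\in\mathcal{P}_s$; this provides the symmetry between the shadows $\mathcal{S}_t$ appearing in \eqref{dHardy dual general} and the paths $\mathcal{P}_s$ appearing in \eqref{dHardy primal}.

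I would argue as follows. First, substitute $\tilde{b}_t = b_t v_t^{-1}$ to recast \eqref{dHardy dual general} as the boundedness $\|T\tilde{\bm b}\|_{\ell^q(\Gamma^*)}\le C\|\tilde{\bm b}\|_{\ell^q(\Gamma^*)}$ of the linear operator
\[
(T\tilde{\bm b})_t = u_t^{-1}\sum_{s\succeq t}v_s\tilde{b}_s,\qquad t\in\Gamma^*,
\]
whose kernel $K_{ts} = u_t^{-1}v_s$ for $s\succeq t$ (and $0$ otherwise) is non-negative. The formal adjoint $T^*\colon\ell^p(\Gamma^*)\to\ell^p(\Gamma^*)$ acts through the transposed kernel, and by the combinatorial identity above
\[
(T^*\tilde{\bm d})_s = \sum_{t\in\Gamma^*,\,s\succeq t} u_t^{-1}v_s\tilde{d}_t = v_s\sum_{a\prec t\preceq s}u_t^{-1}\tilde{d}_t.
\]
Setting $d_t = u_t^{-1}\tilde{d}_t$ (equivalently $\tilde{d}_t = u_t d_t$) turns $\|T^*\tilde{\bm d}\|_{\ell^p}\le C\|\tilde{\bm d}\|_{\ell^p}$ into exactly \eqref{dHardy primal}. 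Since $\|T\|_{\ell^q\to\ell^q}=\|T^*\|_{\ell^p\to\ell^p}$, the two inequalities are equivalent and share the same optimal constant. Equivalently, one may avoid operator language and reach the same conclusion by applying the variational formula $\|\cdot\|_{\ell^q}=\sup\{\langle\cdot,{\bm e}\rangle:\|{\bm e}\|_{\ell^p}\le 1\}$ to the left-hand side of \eqref{dHardy dual general}, swapping the order of summation, and re-applying duality.

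The main obstacle is justifying the interchange of summations when $\Gamma^*$ is infinite and the sequences are signed. I would handle this by first establishing the equivalence for finitely supported $\tilde{\bm b}$ and $\tilde{\bm d}$, where Fubini is immediate, and then extending by density: finitely supported sequences are dense in both $\ell^q(\Gamma^*)$ and $\ell^p(\Gamma^*)$, and both sides of \eqref{dHardy dual general} and \eqref{dHardy primal} are well behaved under truncation (one can approximate any admissible ${\bm b}$ or ${\bm d}$ by its restriction to finite subtrees and apply monotone convergence in the non-negative case $|\tilde{b}_t|$, $|\tilde{d}_t|$), so the optimal constants on all admissible sequences coincide with those on finitely supported ones. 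No genuinely new estimate is needed beyond this bookkeeping.
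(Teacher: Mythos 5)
Your argument is correct and is essentially the paper's own proof: the paper characterizes the optimal constant by a double supremum (duality), interchanges the order of summation using $s\succeq t\Leftrightarrow a\prec t\preceq s$, and re-reads the result as the dual characterization of \eqref{dHardy primal} after the substitutions ${\bm d}={\bm\delta}{\bm u}^{-1}$, ${\bm\beta}={\bm b}{\bm v}^{-1}$ — which is exactly your adjoint-operator computation in different notation. Your extra care with truncation and monotone convergence to justify Fubini on an infinite tree is a reasonable refinement of a step the paper leaves implicit.
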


\begin{proof}
The best constant $C$ for \eqref{dHardy dual general} can be characterized by duality as: 
 \begin{align*}
C &= \sup_{{\bm \delta}:\|{\bm\delta}\|_p=1}\ \sup_{{\bm b}:\|{\bm b}{\bm v}^{-1}\|_q=1} \sum_{t\in \Gamma^*}u_t^{-1}\Big(\sum_{s\succeq t}b_s\Big)\delta_t \\
  &= \sup_{{\bm b}:\|{\bm b}{\bm v}^{-1}\|_q=1}\ \sup_{{\bm \delta}:\|{\bm\delta}\|_p=1} \sum_{s\in \Gamma^*}b_s\Big(\sum_{a\prec t\preceq s}\delta_t u_t^{-1}\Big).
\end{align*} 
Now, taking ${\bm d}= {\bm \delta}{\bm u}^{-1}$ and ${\bm \beta} = {\bm b} {\bm v}^{-1}$, we obtain the dual characterization of the optimal constant in \eqref{dHardy primal}:
 \begin{align*}
C&= \sup_{{\bm \beta}:\|{\bm \beta}\|_q=1}\ \sup_{{\bm d}:\|{\bm d}{\bm u}\|_p=1} \sum_{s\in \Gamma^*} v_s\Big(\sum_{a\prec t\preceq s}d_t\Big)\beta_s.
\end{align*}
\qed\end{proof}

The goal of this section is to establish conditions for \eqref{dHardy dual general} to hold that can be verified in our examples. 

It is known (see for example \cite{KMP}, \cite{KPS}, \cite{O}) that the classical necessary and sufficient condition for the continuous Hardy inequality in an interval translates to the discrete case. Namely, if $\Gamma$ is a \emph{chain} (i.e. a tree where each vertex has at most one child), then inequalities \eqref{dHardy primal} and \eqref{dHardy dual general} hold if and only if
\begin{equation}\label{chain cond}
A_{chain} = \sup_{t\in\Gamma^*} \left(\sum_{a\prec s\preceq t}u_s^{-q}\right)^\frac{1}{q}\left(\sum_{s\succeq t}v_s^p\right)^\frac{1}{p}<\infty.
\end{equation}
Moreover, the constant $C$ in \eqref{dHardy dual general} is proportional to $A_{chain}$. 

The authors in \cite{EHP} studied continuous Hardy inequalities on trees, where their main result can be easily translated to the discrete case as shown in the following theorem. However, they also showed that, on trees that are not chains, condition \eqref{chain cond} is necessary for the validity of \eqref{dHardy dual general}, but not sufficient. 

\begin{theorem}\label{ehp theorem}
Let $\Gamma$ be a discrete tree with root $a$. Given $K$ a subtree of $\Gamma$, we define its boundary as $\partial K = \{s\in K:\; \exists t,\, t_p = s,\, t\notin K\}.$ We also define the following class formed by some subtrees of $\Gamma$:
\[\mathcal{K}=\{K \textrm{ subtree of }\Gamma:\, a\in K,\, \textrm{ and if } s\in\partial K, \textrm{ then } t\notin K, \, \forall t\succ s\}.\]
For $K\in\mathcal{K}$, we define the interior of $K$, $K^\circ=K\setminus \partial K$. 
Then inequality \eqref{dHardy dual general} holds if and only if: 
\begin{equation}\label{EHP cond}
B := \sup_{K\in\mathcal{K}} \frac{\|\bm{v}\|_{\ell^p(\Gamma\setminus K^\circ)}}{\alpha_K}<\infty,
\end{equation}
where:
\[\alpha_K := \inf\Big\{\|{\bm b}\|_{p}:\; \sum_{a\prec s\preceq t}|b_s|u_s^{-1} = 1, \,\forall t\in \partial K\Big\}.\]
Moreover, the constant $C$ in \eqref{dHardy dual general} is proportional to $B$.
\end{theorem}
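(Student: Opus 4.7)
The plan is to derive the theorem from the continuous Hardy-inequality characterization established in \cite{EHP} by reducing the discrete problem to its continuous counterpart through a piecewise constant identification. I would associate with each $t\in\Gamma^*$ a segment $e_t$ of unit length, attached at its upper endpoint to the edge $e_{t_p}$ (or to the root $a$ when $t_p=a$). This produces a continuous tree $\widetilde\Gamma$ rooted at $a$, and the correspondence $t\mapsto e_t$ lifts discrete data to functions on $\widetilde\Gamma$ by setting $\widetilde u(x)=u_t$, $\widetilde v(x)=v_t$ and $\widetilde b(x)=b_t$ whenever $x\in e_t$.

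Under this identification, the sums in \eqref{dHardy dual general} agree, up to multiplicative constants depending only on $p$, with the corresponding Hardy integrals on $\widetilde\Gamma$: if $y_t$ denotes the upper endpoint of $e_t$, then $\sum_{s\succeq t}b_s=\int_{x\succeq y_t}\widetilde b\,dx$, and the discrete outer sum differs from the continuous outer integral only by a bounded perturbation accounting for the variation of $\int_{y\succeq x}\widetilde b(y)\,dy$ as $x$ traverses $e_t$. A standard averaging argument (replacing a general test function on $\widetilde\Gamma$ by its edge-wise mean and applying Jensen's inequality) shows that the continuous Hardy inequality on $\widetilde\Gamma$ holds for all admissible test functions if and only if it holds for piecewise constant ones. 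Hence \eqref{dHardy dual general} is equivalent, with comparable constants, to the continuous Hardy inequality on $\widetilde\Gamma$ with weights $\widetilde u,\widetilde v$.

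At this point I would invoke the main theorem of \cite{EHP}: the continuous inequality holds if and only if a supremum of the form $\sup_K\|\widetilde v\|_{L^p(\widetilde\Gamma\setminus K^\circ)}/\mathrm{cap}_{\widetilde u}(K)$ is finite, with $K$ ranging over admissible subtrees (those containing the root, whose boundary points have no descendants within $K$) and $\mathrm{cap}_{\widetilde u}(K)$ being the infimum of $\|h\|_{L^p(\widetilde\Gamma)}$ over non-negative $h$ satisfying $\int_a^z h(x)\widetilde u(x)^{-1}\,dx=1$ for every $z\in\partial K$. Admissible continuous subtrees correspond bijectively to elements of $\mathcal{K}$, and the numerator equals $\|\bm v\|_{\ell^p(\Gamma\setminus K^\circ)}$ by construction.

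The main obstacle, and the technical heart of the argument, is showing that $\mathrm{cap}_{\widetilde u}(K)=\alpha_K$. One inequality is immediate: a non-negative sequence $\bm b$ admissible for $\alpha_K$ yields the step function $h=b_s$ on $e_s$, which is admissible for the continuous capacity and has the same $L^p$-norm, so $\mathrm{cap}_{\widetilde u}(K)\le\alpha_K$. Conversely, given any non-negative admissible $h$, set $b_s=\int_{e_s}h\,dx$; then $u_s^{-1}b_s=\int_{e_s}h\widetilde u^{-1}\,dx$, whence $\sum_{a\prec s\preceq t}b_s u_s^{-1}=\int_a^{y_t}h\widetilde u^{-1}\,dx=1$ for every $t\in\partial K$, and Jensen's inequality on each unit segment gives $\|\bm b\|_{\ell^p}\le\|h\|_{L^p}$. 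This yields $\alpha_K\le\mathrm{cap}_{\widetilde u}(K)$, completing the identification $\alpha_K=\mathrm{cap}_{\widetilde u}(K)$, and hence the equivalence of $B<\infty$ with the continuous EHP condition.
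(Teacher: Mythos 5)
Your overall strategy --- realize $\Gamma$ as a continuous tree with unit edges, transfer the data, and invoke the main theorem of \cite{EHP} --- is exactly the paper's strategy. The difference is in how the discrete weights are placed on the edges, and this is where your argument has genuine gaps. You spread $v_t$ uniformly over the edge $e_t$, and then assert three exact identifications that are false as stated. First, admissible continuous subtrees do \emph{not} correspond bijectively to elements of $\mathcal{K}$: a continuous subtree may have boundary points in the interior of an edge, and the continuous condition requires the supremum over \emph{all} such subtrees, so for the direction ``$B<\infty\Rightarrow$ inequality'' you must still bound the ratio for mid-edge cuts by the vertex-aligned ones. Second, even for a vertex-aligned subtree you cannot simultaneously match numerator and denominator: if the continuous boundary sits at the upper endpoints $y_s$, $s\in\partial K$, then your capacity does equal $\alpha_K$ (your Jensen argument is fine), but the numerator $\int_{\widetilde\Gamma\setminus K^\circ}\widetilde v^p$ equals $\sum_{t\in\Gamma\setminus K}v_t^p$, which is \emph{missing} the boundary contribution $\sum_{s\in\partial K}v_s^p$ present in $\|\bm v\|_{\ell^p(\Gamma\setminus K^\circ)}^p$; if instead you push the boundary down to the lower endpoints to capture those terms, the capacity constraint loses the summand $b_su_s^{-1}$ and no longer equals $\alpha_K$. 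Third, the comparison of the two left-hand sides has the same off-by-one problem in the diagonal term $s=t$: for $x\in e_t$ the quantity $\int_{y\succeq x}\widetilde b$ interpolates between $\sum_{s\succ t}b_s$ and $\sum_{s\succeq t}b_s$, so the continuous left-hand side can be strictly smaller than the discrete one; calling this a ``bounded perturbation'' hides the fact that controlling it requires first extracting the pointwise bound $v_t\le C'u_t$ from the continuous inequality.

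The paper resolves all three issues at once with a limiting device you do not have: the weight $v_s$ is not spread uniformly but concentrated in an $\varepsilon$-neighborhood of the upper endpoint of $e_s$ via a normalized bump $\varphi_\varepsilon^s$ with $\int(\varphi_\varepsilon^s)^p=1$, while $u_s^{-1}$ stays uniform. Trimming a discrete subtree $K$ by $\varepsilon$ at its leaves then puts the entire mass $v_s^p$, $s\in\partial K$, into the complement (so the numerator is exactly $\|\bm v\|_{\ell^p(\Gamma\setminus K^\circ)}$), while the capacity constraint at the trimmed boundary point converges to $\sum_{a\prec r\preceq s}b_ru_r^{-1}=1$ as $\varepsilon\to0$; the same concentration makes the two left-hand sides agree up to a factor $(1-\varepsilon)$. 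One then passes to the limit $\varepsilon\to0^+$ in both constants. Your proposal could likely be repaired along these lines (or by proving two-sided comparability of the ``off-by-one'' versions of the condition, which again uses $v_t\lesssim u_t$), but as written the exact equalities you rely on do not hold.
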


Condition \eqref{EHP cond} is rather cumbersome and one can find it very hard to prove in practical examples. However, valuable information can be derived from it. E.g., fixing a vertex $t\in\Gamma^*$, consider the sub-tree: $K=\Gamma\setminus \mathcal{S}_t$. In this case, $\|\bm{v}\|_{\ell^p(\Gamma^*\setminus K)}=\|\bm{v}\|_{\ell^p(\mathcal{S}_t)}$. On the other hand, $t$ is the only vertex in $\partial K$ and $\alpha_K^{-1}$ becomes a dual characterization of $\|\bm{u}^{-1}\|_{\ell^q(\mathcal{P}_t)}$. Hence, the expression inside the supremum of \eqref{EHP cond} becomes the expression inside the supremum of \eqref{chain cond}, which proves: $A_{chain}\le B$. The converse, however is not true: in \cite[Section 5]{EHP} an example is given where $B=\infty$ whereas $A_{chain}$ remains bounded. 

In \cite{EHP} a recursive method for computing $\alpha_K$ is given, as well as several sufficient and slightly less complex conditions. But the main difficulty, namely: the necessity of estimating a supremum over \emph{all} subtrees in $\mathcal{K}$, remains. Hence, we prove  in the following Theorem a sufficient condition that can be regarded as a generalization of \eqref{chain cond}, and which is almost as easy to check. On the downside, we were not able to compare our sufficient condition with the other sufficient conditions given in \cite{EHP}. 

\begin{theorem}\label{suff cond}
Let ${\bm u} = \{u_t\}_{t\in\Gamma^*}$ and ${\bm v} = \{v_t\}_{t\in\Gamma^*}$ be two weights that satisfy:
\begin{equation}\label{tree cond}
A_{tree}:=\sup_{t\in\Gamma^*}\left(\sum_{a\prec s\preceq t} u_s^{-q}\right)^\frac{1}{\theta q}\left(\sum_{s\succeq t}v_s^p \bigg(\sum_{a\prec r\preceq s}u_r^{-q}\bigg)^{\frac{p}{q}(1-\frac{1}{\theta})}\right)^\frac{1}{p}<\infty,
\end{equation}
for some $\theta>1$. Then inequality \eqref{dHardy primal} holds. In addition, the optimal constant in  \eqref{dHardy primal} satisfies that $C\leq \Big(\frac{\theta}{\theta-1}\Big)^\frac{1}{q}A_{tree}.$
\end{theorem}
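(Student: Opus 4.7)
The plan is to reduce \eqref{dHardy primal} to a chain estimate along each path $\mathcal{P}_s$ by applying H\"older's inequality with a well-chosen auxiliary weight, and then swap the order of summation so that the hypothesis can be factored out. Write $U_t := \sum_{a\prec r\preceq t} u_r^{-q}$, which is non-decreasing along every path from the root.

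First, I fix $s \in \Gamma^*$, split $d_t = \bigl(d_t u_t U_t^{1/(\theta q)}\bigr)\cdot\bigl(u_t^{-1} U_t^{-1/(\theta q)}\bigr)$, and apply H\"older's inequality along the (totally ordered) chain $\mathcal{P}_s$:
\begin{equation*}
\Big|\sum_{a\prec t\preceq s} d_t\Big|^p \le \Big(\sum_{a\prec t\preceq s} |d_t u_t|^p \, U_t^{p/(\theta q)}\Big) \Big(\sum_{a\prec t\preceq s} u_t^{-q} U_t^{-1/\theta}\Big)^{p/q}.
\end{equation*}
Since $1/\theta\in(0,1)$ and $U_t$ is the cumulative partial sum of $u_r^{-q}$ along $\mathcal{P}_s$, the standard integral comparison using the monotonicity of $x^{-1/\theta}$ yields
\begin{equation*}
\sum_{a\prec t\preceq s} u_t^{-q} U_t^{-1/\theta} \le \int_0^{U_s} x^{-1/\theta}\,dx = \frac{\theta}{\theta-1}\, U_s^{1-1/\theta}.
\end{equation*}

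Next, I multiply by $v_s^p$, sum over $s\in\Gamma^*$, and interchange the order of summation using $\{s \in \Gamma^* : t\preceq s\} = \mathcal{S}_t$:
\begin{equation*}
\sum_{s\in\Gamma^*} v_s^p \Big|\sum_{a\prec t\preceq s} d_t\Big|^p \le \Big(\frac{\theta}{\theta-1}\Big)^{p/q} \sum_{t\in\Gamma^*} |d_t u_t|^p \, U_t^{p/(\theta q)} \sum_{s\succeq t} v_s^p \, U_s^{(p/q)(1-1/\theta)}.
\end{equation*}
By \eqref{tree cond}, the product $U_t^{p/(\theta q)}\sum_{s\succeq t} v_s^p U_s^{(p/q)(1-1/\theta)}$ is bounded by $A_{tree}^p$ uniformly in $t$, so pulling this supremum out and taking $p$-th roots gives \eqref{dHardy primal} with constant $C \le (\theta/(\theta-1))^{1/q} A_{tree}$.

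The one genuinely nontrivial choice is the exponent $\sigma=1/\theta$ in the H\"older split, which is forced on us by matching the exponents inside and outside the sums of $A_{tree}$; after that, the argument is just Fubini on the tree together with the one-dimensional discrete integration estimate, which is precisely the mechanism that underlies the chain condition \eqref{chain cond}. The main obstacle would have been the possibility of branching complicating the integral-comparison step, but this is sidestepped by performing the H\"older split \emph{path-by-path} before summing over $s$, since each $\mathcal{P}_s$ is itself a chain.
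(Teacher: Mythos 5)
Your proof is correct and follows essentially the same route as the paper: the same H\"older split with auxiliary weight $U_t^{1/(\theta q)}$, the same Fubini step over $\mathcal{S}_t$, and the same bound on $\sum_{a\prec t\preceq s}u_t^{-q}U_t^{-1/\theta}$ (your integral comparison $\sum (U_t-U_{t_p})U_t^{-1/\theta}\le\int_0^{U_s}x^{-1/\theta}\,dx$ is just the paper's concavity-plus-telescoping argument written in integral form). No gaps.
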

\begin{proof}
We follow an idea used in \cite{M}. However, the introduction of the parameter $\theta$ is crucial for obtaining a sharper result. We can assume that $d_t\geq0$.

We begin observing that the concavity of the function $f(x) = x^{1-\frac{1}{\theta}}$ implies, via the mean value theorem, the following inequality for $0\le x_1< x_2:$
\begin{equation}\label{concavity}
\frac{x_2-x_1}{x_2^\frac{1}{\theta}} \le \frac{\theta}{\theta-1}(x_2^{1-\frac{1}{\theta}}-x_1^{1-\frac{1}{\theta}}).
\end{equation}
Now, let us define $N(t) := \sum_{a\prec r\preceq t}u_r^{-q}$. Applying H\"older inequality we obtain: 
 \begin{align*}
I &:= \sum_{s\in\Gamma^*}v_s^p\Big(\sum_{a\prec t\preceq s} d_t\Big)^p = \sum_{s\in\Gamma^*}v_s^p\Big(\sum_{a\prec t\preceq s} d_t u_t N(t)^{\frac{1}{\theta q}} u_t^{-1}N(t)^{-\frac{1}{\theta q}}\Big)^p \\
&\le\sum_{s\in\Gamma^*}v_s^p\Big(\sum_{a\prec t\preceq s} d_t^p u_t^p N(t)^{\frac{p}{\theta q}}\Big)\Big(\sum_{a\prec t\preceq s} u_t^{-q}N(t)^{-\frac{1}{\theta}}\Big)^\frac{p}{q}.
\end{align*} 
For the last factor, observe that $u_t^{-q} = N(t)-N(t_p)$ for every $t$, where  $N(a)$ is defined as $0$. This and  \eqref{concavity} give: 
\[u_t^{-q}N(t)^{-\frac{1}{\theta}}=\frac{N(t)-N(t_p)}{N(t)^{\frac{1}{\theta}}}\le \frac{\theta}{\theta-1}\big(N(t)^{1-\frac{1}{\theta}}-N(t_p)^{1-\frac{1}{\theta}}\big).\]

Now, we apply a telescopic argument along the path that goes from $a$ to $s$, obtaining: 
 \begin{align*}
I&\le \Big(\frac{\theta}{\theta-1}\Big)^\frac{p}{q}\sum_{s\in\Gamma^*}v_s^p\Big(\sum_{t\preceq s} d_t^p u_t^p N(t)^{\frac{p}{\theta q}}\Big)\Big(\sum_{t\preceq s}\big(N(t)^{1-\frac{1}{\theta}}-N(t_p)^{1-\frac{1}{\theta}}\big)\Big)^\frac{p}{q}\\
&= \Big(\frac{\theta}{\theta-1}\Big)^\frac{p}{q}\sum_{s\in\Gamma^*}v_s^p\Big(\sum_{a\prec t\preceq s} d_t^p u_t^p N(t)^{\frac{p}{\theta q}}\Big)N(s)^{\frac{p}{q}(1-\frac{1}{\theta})}.
\end{align*} 
Interchanging the summations and applying condition $\eqref{tree cond}$:

 \begin{align*}
I&\le \Big(\frac{\theta}{\theta-1}\Big)^\frac{p}{q}\sum_{t\in\Gamma^*}d_t^p u_t^p \Big[N(t)^\frac{p}{\theta q} \sum_{s\succeq t} v_s^p N(s)^{\frac{p}{q}(1-\frac{1}{\theta})}\Big] \\
&\le \Big(\frac{\theta}{\theta-1}\Big)^\frac{p}{q}A_{tree}^p\sum_{t\in\Gamma^*}d_t^p u_t^p,
\end{align*} 
and the result follows. 
\qed\end{proof}

\begin{remark}
Condition \eqref{tree cond}, with the parameter $\theta>1$, resembles similar sufficient conditions that appear when dealing with weighted inequalities involving two weights. See, for example \cite[Theorem 1]{SW}.
\end{remark}

\begin{remark}\label{rmk integrability}
Observe that condition \eqref{tree cond} (as well as \eqref{chain cond}) implies that $\sum_{s\in\Gamma^*}v_s^p<\infty$. In other words: ${\bm v}\in \ell^p(\Gamma^*)$.
\end{remark}

\begin{remark}
The proof for Theorem \ref{suff cond} can be copied verbatim replacing $\Gamma^*$ by $\Gamma$, both in the inequality \eqref{dHardy primal} and in the condition \eqref{tree cond}. 
\end{remark}

Observe that as $\theta$ approaches $1$, condition \eqref{tree cond} ``tends" to condition \eqref{chain cond}. It seems that we cannot take $\theta\to 1$, since the factor $\frac{\theta}{\theta-1}$ goes to infinity. However, condition \eqref{tree cond} is actually equivalent to \eqref{chain cond}, if $\Gamma$ is a chain. Indeed: 

\begin{theorem}\label{equivalence}
If $\Gamma$ is a chain, then conditions \eqref{chain cond} and \eqref{tree cond} are equivalent.
\end{theorem}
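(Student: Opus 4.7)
The plan has two directions, one essentially trivial and one using a dyadic decomposition.

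For the direction $A_{tree} < \infty \Rightarrow A_{chain} < \infty$ (which in fact holds on any rooted tree, not just chains), I will establish the termwise inequality $A_{tree}(t) \ge A_{chain}(t)$. Fix $t \in \Gamma^*$, set $\alpha := \frac{p}{q}(1 - \frac{1}{\theta}) > 0$, and use the shorthand $N(t) := \sum_{a \prec r \preceq t} u_r^{-q}$ (as in the proof of Theorem \ref{suff cond}) together with $V(t) := \sum_{s \succeq t} v_s^p$. Since $s \succeq t$ forces $N(s) \ge N(t)$, the inner sum in $A_{tree}(t)$ is bounded below by $N(t)^\alpha V(t)$, and the arithmetic identity $\frac{1}{q\theta} + \frac{\alpha}{p} = \frac{1}{q}$ delivers $A_{tree}(t) \ge N(t)^{1/q} V(t)^{1/p} = A_{chain}(t)$. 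Taking the supremum finishes this direction.

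For the converse $A_{chain} < \infty \Rightarrow A_{tree} < \infty$, I plan to decompose the shadow $\mathcal{S}_t$ dyadically in the scale of $N$. Fix $t$, choose the unique $i_0 \in \mathbb{Z}$ with $2^{i_0} \le N(t) < 2^{i_0+1}$, and set $I_i := \{s \succeq t : 2^i \le N(s) < 2^{i+1}\}$ for $i \ge i_0$. Since $\Gamma$ is a chain and $N$ is strictly increasing along it, each nonempty $I_i$ is a consecutive subchain with a well-defined first element $s_i^*$. On $I_i$ one has $N(s)^\alpha < 2^\alpha 2^{i\alpha}$, so
\begin{equation*}
\sum_{s \succeq t} v_s^p N(s)^\alpha \le 2^\alpha \sum_{i \ge i_0} 2^{i\alpha} B_i, \quad B_i := \sum_{s \in I_i} v_s^p \le V(s_i^*) \le A_{chain}^p\, 2^{-ip/q},
\end{equation*}
where the last estimate is the chain hypothesis applied at $s_i^*$. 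Since $\alpha - p/q = -p/(q\theta) < 0$ the resulting series in $i$ is geometric and sums to a constant multiple of $2^{-i_0 p/(q\theta)}$, which by the choice of $i_0$ is comparable to $N(t)^{-p/(q\theta)}$. Multiplying by $N(t)^{1/(q\theta)}$ and taking $p$-th roots then yields $A_{tree}(t) \le C(\theta,p,q)\, A_{chain}$, uniformly in $t$.

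The only mild obstacle is that $i_0$ must be allowed to range over all of $\mathbb{Z}$ so that arbitrary values of $N(t)$ are handled; convergence of the geometric series is automatic from $p/(q\theta) > 0$. The argument leans heavily on the chain structure via the strict monotonicity of $N$ and the existence of a first element in each block $I_i$, so it does not extend to arbitrary trees, which is consistent with the remark after Theorem \ref{ehp theorem} that \eqref{chain cond} fails to be sufficient off chains.
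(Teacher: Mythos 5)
Your proof is correct, and both directions take a genuinely different route from the paper's. For the easy direction the paper argues indirectly: \eqref{tree cond} implies the Hardy inequality \eqref{dHardy primal}, which on chains is known to be equivalent to \eqref{chain cond}. Your termwise bound $A_{tree}(t)\ge N(t)^{1/(q\theta)}N(t)^{\alpha/p}V(t)^{1/p}=N(t)^{1/q}V(t)^{1/p}$ is more elementary and strictly stronger, since it shows $A_{chain}\le A_{tree}$ pointwise on \emph{any} rooted tree, not just chains. For the converse the paper first uses \eqref{chain cond} to replace $N(s)^{p(1-1/\theta)/q}$ by $A_{chain}^{p(1-1/\theta)}V(s)^{1/\theta-1}$ and then proves the auxiliary inequality \eqref{cond 2}, $\sum_{s\succeq t}v_s^pV(s)^{1/\theta-1}\le\theta V(t)^{1/\theta}$, by a convexity-plus-telescoping argument in the scale of $V$ (a discrete change of variables). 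You instead decompose $\mathcal{S}_t$ dyadically in the scale of $N$ and apply the chain hypothesis at the leader $s_i^*$ of each block; the geometric series in $2^{i(\alpha-p/q)}=2^{-ip/(q\theta)}$ then closes the estimate. Both uses of the chain structure are essential and appear in different places: the paper needs the unique child $s_c$ for the telescoping sum in \eqref{cond 2}, while you need each block $I_i$ to be an order interval contained in the shadow of a single minimal element (on a general tree $I_i$ could have unboundedly many minimal elements, and the argument would fail, as it must). The paper's route buys the sharper explicit constant $\theta^{1/p}A_{chain}$ and isolates \eqref{cond 2} as a reusable condition on general trees (stated as a corollary); your route is self-contained, avoids the continuous heuristic behind \eqref{cond 2}, and gives the cleaner one-line proof of the first direction, at the cost of a slightly worse constant of the form $\bigl(2^{\alpha+p/(q\theta)}/(1-2^{-p/(q\theta)})\bigr)^{1/p}A_{chain}$.
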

\begin{proof}
 \eqref{tree cond} implies the validity of \eqref{dHardy primal}, which is equivalent to \eqref{chain cond} on chains, proving that \eqref{tree cond} implies \eqref{chain cond}. 

Suppose now that \eqref{chain cond} holds. Then: 
\[\Big(\sum_{a\prec r\preceq s}u_r^{-q}\Big)^\frac{1}{q}\le A_{chain}\Big(\sum_{r\succeq s}v_r^p\Big)^{-\frac{1}{p}},\]
which in turn gives: 
 \begin{align*}
\sum_{s\succeq t} v_s^p\Big(\sum_{a\prec r\preceq s}u_r^{-q}\Big)^{\frac{p}{q}(1-\frac{1}{\theta})}\le A_{chain}^{p(1-\frac{1}{\theta})}\sum_{s\succeq t}v_s^p\Big(\sum_{r\succeq s}v_r^p\Big)^{\frac{1}{\theta}-1}.
\end{align*} 

Now, let us assume the following inequality holds on chains for any $\theta>1:$
\begin{equation}\label{cond 2}
\sum_{s\succeq t}v_s^p\Big(\sum_{r\succeq s}v_r^p\Big)^{\frac{1}{\theta}-1}\le \theta\Big(\sum_{s\succeq t}v_s^p\Big)^\frac{1}{\theta}.
\end{equation}
Using this, we obtain: 
 \begin{align*}
A_{tree} &= \sup_{t\in\Gamma^*} \Big(\sum_{a\prec s\preceq t}u_s^{-q}\Big)^{\frac{1}{\theta q}}\Big(\sum_{a\prec s\preceq t}v_s^p\big(\sum_{a\prec r\preceq s}u_r^{-q}\big)^{\frac{p}{q}(1-\frac{1}{\theta})}\Big)^\frac{1}{p}  \\
&\le \theta^\frac{1}{p} A_{chain}^{1-\frac{1}{\theta}}\sup_{t\in\Gamma^*}\Big(\sum_{a\prec s\preceq t} u_s^{-q}\Big)^\frac{1}{\theta q} \Big(\sum_{s\succeq t}v_s^p\Big)^\frac{1}{p\theta}\le \theta^\frac{1}{p} A_{chain}.
\end{align*} 
Hence, it only remains to prove \eqref{cond 2}. Let us first present the main idea of why \eqref{cond 2} holds naturally on every chain. Suppose that we are working on a continuous setting. In that case, the left member of \eqref{cond 2} would become:
\[I = \int_t^\infty v(s)^p \Big(\int_s^\infty v(x)^p \dx\Big)^{\frac{1}{\theta}-1}\textrm{d}s.\]
Now, through the substitution $\xi = F(s) = \int_s^\infty v(x)^p \dx$, d$\xi = -v(s)^p\textrm{d}s$, we have:
\[I = -\int_{F(t)}^0 \xi^{\frac{1}{\theta}-1}\textrm{d}\xi = \theta \xi^\frac{1}{\theta}|_0^{F(t)} = \theta F(t)^\frac{1}{\theta} = \theta\Big(\int_t^\infty v(s)^p\textrm{d}s\Big)^\frac{1}{\theta},\]
which is the continuous analog to the right hand side of \eqref{cond 2}.

Now, in the discrete case, we cannot change variables as we did with the integral, but an adapted version of the same idea can be applied. We proceed in a similar way than the proof of Theorem \ref{suff cond}: we define $M(s) = \sum_{r\succeq s} v_r^p$. Recalling Remark \ref{rmk integrability}, we have that $M(s)<\infty$ and $\lim_{s\to\infty} M(s)=0$. We denote $s_c$ the child of $s$ along $\Gamma$, which is unique thanks to the fact that $\Gamma$ is a chain. Applying the convexity of the function $f(x) = x^\frac{1}{\theta}$ and a telescopic argument, we obtain: 
 \begin{align*}
\sum_{s\succeq t} v_s^p \Big(\sum_{r\succeq s}v_r^p\Big)^{\frac{1}{\theta}-1} &= 
\sum_{s\succeq t} \frac{M(s)-M(s_c)}{M(s)^{1-\frac{1}{\theta}}}\le \theta\sum_{s\succeq t}M(s)^\frac{1}{\theta}-M(s_c)^\frac{1}{\theta} \\
&= \theta M(t)^\frac{1}{\theta} = \theta\Big(\sum_{s\succeq t} v_s^p\Big)^\frac{1}{\theta},
\end{align*} 
which  completes the proof. 

Observe that the fact that each $s$ has only one child $s_c$ is crucial for the telescopic argument to hold. On general trees, this step cannot be performed, and the proof fails. 
\qed\end{proof}  

The proof of the previous Theorem shows that condition \eqref{tree cond} can be unravel into two: the classical condition \eqref{chain cond}, and the additional \eqref{cond 2}. We state this as a corollary, although for the application considered here it is easier to work directly with \eqref{tree cond}.

\begin{corollary}
Let $\Gamma$ be a tree. If ${\bm u}=\{u_t\}_{t\in\Gamma}$ and ${\bm v}=\{v_t\}_{t\in\Gamma}$ verify both \eqref{chain cond} and \eqref{cond 2} (with any constant), then \eqref{tree cond} holds. 
\end{corollary}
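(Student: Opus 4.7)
The plan is to adapt the second half of the proof of Theorem \ref{equivalence}. That calculation never really used the chain structure of $\Gamma$ beyond invoking \eqref{cond 2} as a black box; once \eqref{cond 2} is available as a hypothesis in its own right, the same chain of estimates should produce \eqref{tree cond} on an arbitrary tree.

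First I would fix $t\in\Gamma^*$ and, for each $s\succeq t$, apply \eqref{chain cond} along the path from the root to $s$ in the form
\[\Big(\sum_{a\prec r\preceq s}u_r^{-q}\Big)^{1/q}\le A_{chain}\Big(\sum_{r\succeq s}v_r^p\Big)^{-1/p}.\]
Raising this to the power $p(1-1/\theta)$, multiplying by $v_s^p$, and summing over $s\succeq t$ bounds the inner sum of $A_{tree}$ by $A_{chain}^{p(1-1/\theta)}\sum_{s\succeq t}v_s^p\big(\sum_{r\succeq s}v_r^p\big)^{1/\theta-1}$. Applying \eqref{cond 2} with its constant $K$ then compresses this factor to $K\,A_{chain}^{p(1-1/\theta)}\big(\sum_{s\succeq t}v_s^p\big)^{1/\theta}$.

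Taking the $p$th root and multiplying by the prefactor $\big(\sum_{a\prec s\preceq t}u_s^{-q}\big)^{1/(\theta q)}$ from \eqref{tree cond}, I would observe that the residual factor $\big(\sum_{a\prec s\preceq t}u_s^{-q}\big)^{1/(\theta q)}\big(\sum_{s\succeq t}v_s^p\big)^{1/(p\theta)}$ is exactly the $(1/\theta)$-th power of the quantity controlled by \eqref{chain cond}, hence bounded by $A_{chain}^{1/\theta}$. Collecting the powers of $A_{chain}$ yields $A_{tree}\le K^{1/p}A_{chain}$. The only obstacle, which is purely bookkeeping, is checking that the exponents $1/(\theta q)$, $(p/q)(1-1/\theta)$ and $1/(p\theta)$ collapse cleanly; they do because $(1-1/\theta)+1/\theta=1$. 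The analytic content is entirely contained in the two hypotheses, and the chain assumption of Theorem \ref{equivalence} is not needed here because \eqref{cond 2} is granted rather than derived.
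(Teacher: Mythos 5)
Your proposal is correct and is exactly the argument the paper intends: the remark preceding the corollary points back to the second half of the proof of Theorem \ref{equivalence}, which, as you observe, uses the chain hypothesis only to establish \eqref{cond 2} and otherwise goes through verbatim on a general tree, yielding $A_{tree}\le K^{1/p}A_{chain}$. The exponent bookkeeping you flag does collapse as claimed, since $A_{chain}^{1-1/\theta}\cdot A_{chain}^{1/\theta}=A_{chain}$.
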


\section{A decomposition of functions}
\label{Decomposition}
\setcounter{equation}{0}

Let $\Omega\subset\R^n$ be a bounded domain with $n\geq 2$. We refer by a {\it weight} $\eta:\Omega\to\R$ to a Lebesgue-measurable function, which is positive almost everywhere.Then, we define the weighted spaces $L^p(\Omega,\eta)$ as the space of Lebesgue-measurable functions $f:\Omega\to\R$ with finite norm 
\[\|f\|_{L^p(\Omega,\eta)}=\left(\int_\Omega |f(x)|^p \eta(x)\, {\rm d}x\right)^{1/p}.\]
 Henceforth, $d,\,d_A:\Omega\to \R$ will denote the distance functions to $\partial\Omega$ and $A\subset \overline{\Omega}$ respectively.

\begin{definition}\label{Definition decomposition} Let $\mathcal{C}$ be the space of constant functions 
from $\R^n$ to $\R$
and  $\{U_t\}_{t\in\Gamma}$ a collection of open subsets of $\Omega$ that covers $\Omega$ except for a set of Lebesgue measure zero; $\Gamma$ is an index set.  It also satisfies the additional requirement that for each $t\in\Gamma$ the set $U_t$ intersects a finite number of $U_s$ with $s\in\Gamma$. This collection $\{U_t\}_{t\in\Gamma}$ is called an {\it open covering of $\Omega$}. Given $g\in L^1(\Omega)$ orthogonal to $\C$ (i.e., $\int g\, \varphi=0$ for all $\varphi\in\C$), we say that a collection of functions $\{g_t\}_{t\in\Gamma}$
  in $L^1(\Omega)$ is a $\C$-{\it orthogonal decomposition of $g$} subordinate to $\{U_t\}_{t\in\Gamma}$ if the following three properties are satisfied:
\begin{enumerate}
\item $g=\sum_{t\in \Gamma} g_t.$
\item $\supp (g_t)\subset U_t.$
\item $\int_{U_t} g_t=0$, for all $t\in\Gamma$.
\end{enumerate}
\end{definition}
We also refer to this collection of functions by a $\C$-{\it decomposition}.  
 Notice that condition (3) is equivalent to the orthogonality to the space $\C$ of constant functions. Indeed, this condition can be replaced by $\int_{U_t} g_t(x)\varphi(x)\dx=0$, for all $\varphi\in\C$ and $t\in\Gamma$. 
 
In Theorem \ref{Decomp Thm} below, we show the existence of a $\C$-orthogonal decomposition by using a constructive argument introduced in \cite{L1}. 

 \begin{definition}\label{admissible weights} Given a countable open covering $\{U_t\}_{t\in\Gamma}$ of $\Omega$, we say that a weight $\eta:\Omega\to \R$ is {\it admissible} if there exists a uniform constant $C$ such that 
\begin{equation}\label{admissible}
\esssup_{x\in U_t} \eta(x)\leq C \essinf_{x\in U_t} \eta(x),
\end{equation}
for all $t \in \Gamma$. Notice that admissible weights are subordinate to $\{U_t\}_{t \in \Gamma}$ of $\Omega$ and $1<p<\infty$. 
 \end{definition}

\begin{example}\label{Whitney} One classical example is induced by a Whitney decomposition. Given $\Omega\subset\R^n$ an open set, it is known (see, for example \cite[Section VI]{S_SingularIntegrals}), that there exists a collection of dyadic closed cubes, $\{Q_j\}_{j\in\N}$, with edges parallel to the coordinate axis, such that  $\Omega=\bigcup_{j} Q_j$, satisfying that the length $\ell(Q_j)$ of the cube $Q_j$ is proportional to $d(Q_j,\partial\Omega)$, where the constants involved does not depend on $j$. Moreover two neighbouring cubes are of similar size. 
These properties are well adapted for working with weights that depend on the distance to the boundary. Then, every weight $\eta(x)=d(x)^\tau$, with $\tau$ in $\R$, is admissible subordinate to $\{U_j\}_j$, where $U_j = \frac{17}{16}Q_j^\circ$. A construction similar to a Whitney decomposition is used in \cite{L1}, and in Section \ref{Holder}.  
\end{example}

\begin{example}\label{cusp} Another example is the one studied in the articles \cite{AO,BLV,L1}, where $\Omega$ is a cuspidal domain with only one singularity (the tip of the cusp) on its boundary. For example, we can consider
\[\Omega:=\{(x_1,x_2)\in\R^2\,\colon\,0<x_1<1\text{ and }0<x_2<x_1^\gamma\},\]
where $\gamma>1$.
In this case, it is of interest to consider weights that depend on the distance to the cusp instead of the distance to the boundary. For that reason, the partition of the domain depends on the singularity we have at the origin as it can be seen at the open covering $\{U_n\}_{n\geq 0}$:
\[U_n=\{(x_1,x_2)\in\Omega\,\colon\,2^{-(n+2)}<x_1<2^{-n}\}.\]
For this open covering, any power $\eta(x)=d_0(x)^\tau$ of the distance to the cusp is admissible.
\end{example}

 \begin{definition} \label{Decomp of Omega}
Let $\Omega\subset\R^n$ be a bounded domain. We say that an open covering $\{U_t\}_{t\in\Gamma}$ is
a {\it tree covering} of $\Omega$ if it also satisfies the properties: 
\begin{enumerate}
\item $\chi_\Omega(x)\leq \sum_{t\in\Gamma}\chi_{U_t}(x)\leq N \chi_\Omega(x)$, for almost every $x\in\Omega$, where $N\geq 1$.
\item   The set of subindices $\Gamma$ has the structure of a rooted tree, i.e. it is the set of vertices of a rooted tree $(\Gamma,E)$ with a
 root $a$.
\item There is a collection $\{B_t\}_{t\neq a}$ of pairwise disjoint open sets with $B_t\subseteq U_t\cap U_{t_p}$.
\end{enumerate}
 \end{definition}

\begin{remark}
Given an open covering of a domain $\Omega$, one can choose an element of the covering as the root, and there are different ways to define a tree-covering. Notice that two vertices on the tree are adjacent only if the intersection of their corresponding open sets is non-empty. Some care should be taken in order to obtain a meaningful tree-covering, according with the geometry of the domain. For example, it is known that the quasi-hyperbolic distance between two cubes in a Whitney decomposition is comparable with the shorter chain of cubes connecting them. Hence, on an open covering like the one in Example \ref{Whitney} we can define a tree-covering by an inductive argument on the quasi-hyperbolic distance to the root: this is done in \cite{H}. Another possible tree-covering on a Whitney decomposition can be defined when the domain is a John domain, in which case each chain connecting a Whitney cube with the root is a Boman chain. This type of tree-covering, which characterizes John domains, is introduced in \cite{L2}.

The open covering for external cusps in Example \ref{cusp} can be seen as a tree-covering that is actually a chain, with the root defined as the open set furthest from the tip of the cusp. 
\end{remark}

Given a tree covering $\{U_t\}_{t\in\Gamma}$ of $\O$ and $\nu,\omega:\O\to\R$ admissible weights subordinate to $\{U_t\}_{t\in \Gamma}$, we define the following discrete Hardy-type inequality on trees for positive sequences $\{b_t\}_{t\in\Gamma}$ 
\begin{equation}\label{dHardy dual}
\left(\sum_{t\in\Gamma^*} |B_t|^{-q/p} \nu_t^{-q}\left(\sum_{s\succeq t} b_s\right)^q \right)^{\frac{1}{q}} \leq C \left(\sum_{t\in\Gamma^*} |B_t|^{-q/p} \omega_t^{-q} b_t^q\right)^{\frac{1}{q}},
\end{equation}
 where the sequence weights $\{\omega_t\}_{t\in\Gamma^*}$ and  $\{\nu_t\}_{t\in\Gamma^*}$ are defined as 
\begin{equation*}
    \omega_t=\essinf_{x\in B_t} \omega(x)\quad \text{and} \quad \nu_t=\essinf_{x\in B_t} \nu(x).
\end{equation*}
Observe that here is where the necessity of working on $\Gamma^*$ becomes apparent, since the weights depend on $B_t$, which plays the role of the edge between $t_p$ and $t$, and is not defined for the root of the tree.

\begin{remark}\label{rmk weights}
Observe that \eqref{dHardy dual} is exactly \eqref{dHardy dual general}, taking $u_t=|B_t|^{\frac{1}{p}}\nu_t$ and $v_t = |B_t|^{\frac{1}{p}}\omega_t$.
\end{remark}

\begin{theorem}\label{Decomp Thm}  Let $\Omega\subset\R^n$ be a bounded domain with a tree covering  $\{U_t\}_{t\in \Gamma}$ such that $\frac{|U_t|}{|B_t|}\le C$ for every $t\in\Gamma^*$, and let $\nu,\omega:\O\to\R$ be admissible weights, with $\omega^p\in L^1(\Omega)$, such that  $L^q(\O,\omega^{-q})\hookrightarrow L^q(\O,\nu^{-q})$ and the weighted discrete Hardy inequality on trees \eqref{dHardy dual} holds. Then, given $g$ in $L^q(\Omega,\omega^{-q})$, with $\int_\O g=0$, there exists $\{g_t\}_{t\in\Gamma}$, a  $\C$-decomposition of $g$, such that 
\begin{equation}\label{Decomp estim}
\sum_{t\in\Gamma} \int_{U_t}|g_t(x)|^q \nu^{-q}(x) \dx \leq C \int_{\Omega} |g(x)|^q \omega^{-q}(x)\dx.
\end{equation}
\end{theorem}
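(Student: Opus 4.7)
The plan is to follow the constructive approach of \cite{L1}. Fix a measurable partition $\{\widetilde U_t\}_{t\in\Gamma}$ of $\Omega$ with $\widetilde U_t\subset U_t$ for every $t$ (for instance, pick a linear order on $\Gamma$ and assign each $x\in\Omega$ to the first $t$ in that order with $x\in U_t$). Set $c_t:=\int_{\widetilde U_t}g$ for $t\in\Gamma$ and $C_t:=\sum_{s\succeq t}c_s$ for $t\in\Gamma^*$; both are finite by H\"older's inequality combined with the hypothesis $\omega^p\in L^1(\Omega)$. Define the flux functions
\[
\phi_t:=\frac{C_t}{|B_t|}\,\chi_{B_t}\quad(t\in\Gamma^*),\qquad \phi_a:=0,
\]
and the candidate decomposition
\[
g_t:=g\,\chi_{\widetilde U_t}-\phi_t+\sum_{s\in\mathrm{Ch}(t)}\phi_s,
\]
where $\mathrm{Ch}(t)$ denotes the children of $t$ in $\Gamma$. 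The three properties in Definition \ref{Definition decomposition} are verified formally: $\supp g_t\subset U_t$ because $B_s\subset U_s\cap U_t$ whenever $s\in\mathrm{Ch}(t)$; the identity $\sum_t g_t=g$ is telescopic, as each non-root $s$ contributes $\phi_s$ with a minus sign in $g_s$ and with a plus sign in $g_{s_p}$; and the vanishing means follow from $C_t=c_t+\sum_{s\in\mathrm{Ch}(t)}C_s$ for $t\in\Gamma^*$ and from $\sum_t c_t=\int_\Omega g=0$ for the root.

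For the weighted estimate, I expand $|g_t|^q\le C_q\bigl(|g|^q\chi_{\widetilde U_t}+|\phi_t|^q\chi_{B_t}+\sum_{s\in\mathrm{Ch}(t)}|\phi_s|^q\chi_{B_s}\bigr)$, integrate against $\nu^{-q}$, and sum over $t\in\Gamma$. The first term contributes $\int_\Omega|g|^q\nu^{-q}\le C\int_\Omega|g|^q\omega^{-q}$ by the embedding hypothesis. Interchanging sums in the two flux contributions (each non-root $s$ is counted exactly once as a child of $s_p$) collapses them into twice the quantity $\sum_{t\in\Gamma^*}\int_{B_t}|\phi_t|^q\nu^{-q}$. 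Invoking the admissibility of $\nu$ on $U_t\supset B_t$ to replace $\nu(x)^{-q}$ by a constant multiple of $\nu_t^{-q}$, and using $1-q=-q/p$, we obtain
\[
\sum_{t\in\Gamma^*}\int_{B_t}|\phi_t|^q\nu^{-q}\le C\sum_{t\in\Gamma^*}|B_t|^{-q/p}\nu_t^{-q}|C_t|^q\le C\sum_{t\in\Gamma^*}|B_t|^{-q/p}\nu_t^{-q}\Big(\sum_{s\succeq t}b_s\Big)^q,
\]
with $b_s:=|c_s|$. Inequality \eqref{dHardy dual} then bounds the last display by $C\sum_{t\in\Gamma^*}|B_t|^{-q/p}\omega_t^{-q}b_t^q$.

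It remains to bound this last sum by $\|g\|^q_{L^q(\Omega,\omega^{-q})}$. H\"older's inequality yields $b_t\le\bigl(\int_{\widetilde U_t}|g|^q\omega^{-q}\bigr)^{1/q}\bigl(\int_{\widetilde U_t}\omega^p\bigr)^{1/p}$; admissibility of $\omega$ on $U_t$ together with $|\widetilde U_t|\le|U_t|\le C|B_t|$ gives $\bigl(\int_{\widetilde U_t}\omega^p\bigr)^{1/p}\le C\,\omega_t\,|B_t|^{1/p}$, so $|B_t|^{-q/p}\omega_t^{-q}b_t^q\le C\int_{\widetilde U_t}|g|^q\omega^{-q}$; summing over the disjoint sets $\widetilde U_t$ closes the estimate. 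The crux of the argument, and the main obstacle, is precisely this double bookkeeping: admissibility must be applied on $U_t$ (rather than on $B_t$, where the discrete weights $\omega_t,\nu_t$ are defined), and the geometric hypothesis $|U_t|/|B_t|\le C$ must be used exactly to convert integrals of $\omega^p$ over $\widetilde U_t$ into the factor $|B_t|^{1/p}\omega_t$, so that the Hardy inequality \eqref{dHardy dual} delivers the right-hand side of \eqref{Decomp estim} with matching constants.
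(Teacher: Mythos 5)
Your construction is the same as the paper's: your $\phi_t$ is exactly the paper's correction function $h_t=\frac{\chi_{B_t}}{|B_t|}\int_{W_t}\sum_{k\succeq t}f_k$ (you merely replace the smooth partition of unity by a sharp one, $f_t=g\chi_{\widetilde U_t}$), and the estimate proceeds identically — split off the $g\chi_{\widetilde U_t}$ term via the embedding, reduce the flux terms to $\sum_t|B_t|^{-q/p}\nu_t^{-q}|C_t|^q$ using disjointness of the $B_t$, apply \eqref{dHardy dual} with $b_s$ essentially $\int_{U_s}|g|$, and close with H\"older, admissibility of $\omega$, and $|U_t|/|B_t|\le C$. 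The argument is correct and matches the paper's proof in all essential respects.
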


\begin{proof}  
Observe that since $g\in L^q(\Omega,\omega^{-q})$, then $g\in L^1(\Omega)$. Indeed, using H\"older inequality and the integrability of $\omega^p$:
 \begin{align*}
    \int_\Omega |g(x)| \dx &= \int_\Omega |g(x)| \omega(x)\omega^{-1}(x) \dx \\
    &\le \Big(\int_\Omega |g(x)|^q\omega^{-q}(x)\Big)^\frac{1}{q} \Big(\int_\Omega \omega(x)^p\Big)^\frac{1}{p} \le C\|g\|_{L^q(\Omega,\omega^{-q})}.
\end{align*}  

Now, let $\{\phi_t\}_{t\in\Gamma}$ be a partition of the unity subordinate to $\{U_t\}_{t\in\Gamma}.$ In other words, we have that ${\rm{supp}}(\phi_t)\subset U_t$, $0\le\phi_t(x)\le 1$ and $\sum_t \phi_t(x) = 1,\quad \forall x\in\Omega$. Now, we can define an initial decomposition for $g$ given by $f_t = g\phi_t$. The collection $\{f_t\}_{t\in\Gamma}$ satisfies properties $(1)$ and $(2)$ in Definition \ref{Definition decomposition}, but not necessarily $(3)$. Hence, we modify these functions in order to obtain the $\mathcal{C}$-orthogonality. 

We define, for $s\in \Gamma$, the \emph{shadow} $W_s$ of $U_s$, as:
\[W_s = \bigcup_{k\succeq s}U_k,\]
and for $s\neq a$
\[h_s(x) = \frac{\chi_s(x)}{|B_s|}\int_{W_s} \sum_{k\succeq s} f_k,\]
where $\chi_s(x)$ is the characteristic function of $B_s$.
Note that ${\rm{supp}}(h_s)\subset B_s$ and $\int h_s(x) dx = \int_{W_s}\sum_{k\succeq s} f_k$. Now, we take: 
\[g_t(x) = f_t(x) + \Big(\sum_{s:s_p = t}h_s(x)\Big) - h_t(x) \quad \forall t\neq a,\]
\[g_a(x) = f_a(x) + \Big(\sum_{s:s_p = a}h_s(x)\Big).\]
Note that the summations above are finite since they are indexed over the children of $t$ (or $a$). With this definitions, we have for $t\neq a$: 
 \begin{align*}
\int_\Omega g_t &= \int_{U_t} f_t + \sum_{s:s_p=t}\int_{B_s} h_s - \int_{B_t} h_t \\
 &= \int_{U_t} f_t + \sum_{s:s_p=t}\int_{W_s}\sum_{k\succeq s}f_k-\int_{W_t}\sum_{k\succeq t}f_k \\ 
&= \int_{U_t} f_t + \sum_{s:s_p=t}\sum_{k\succeq s}\int_{U_k}f_k - \sum_{k\succeq t}\int_{U_k} f_k \\
&= \sum_{k\succeq t} \int_{U_k} f_k- \sum_{k\succeq t}\int_{U_k} f_k= 0.
\end{align*} 
Whereas for $t=a$: 
 \begin{align*}
\int_\Omega g_a &= \int_{U_a} f_a + \sum_{s:s_p=a}\int_{B_s} h_s \\
 &= \int_{U_a} f_a + \sum_{s:s_p=a}\int_{W_s}\sum_{k\succeq s}f_k \\ 
&=  \int_\Omega \sum_{k\succeq a} f_k 
=\int_\Omega g= 0.
\end{align*} 

Hence, $\{g_t\}_{t\in\Gamma}$ is a $\mathcal{C}$-$orthogonal$ decomposition of $g$. It remains to prove estimate \eqref{Decomp estim}, which is a consequence of inequality \eqref{dHardy dual}. %Indeed, if $x\in U_t\setminus \bigcup \{B_s:\, s=t \textrm{ or } s_p=t\}$, then $|g_t(x)|=|f_t(x)|$. On the other hand, if $x\in B_s$ for $s$ such that $s=t$ or $s_p = t$, then $|g_t(x)|\le|f_t(x)|+|h_s(x)|$. 
 Recall that the support of each $h_s$, with $s\in \Gamma^*$, is included in $B_s$, and the collection of open sets $\{B_t\}_{t\neq a}$ is pairwise disjoint. Moreover, $h_s$ appears in the definition of $g_t$ if and only if $t=s$ or $t=s_p$. Now we can prove the estimate:
 \begin{align*}
&\sum_{t\in\Gamma} \int_{U_t}|g_t(x)|^q\nu(x)^{-q}\dx & \\
& \le \sum_{t\in\Gamma^*} 2^{q-1}  \int_{U_t}\left(|f_t(x)|^q + \left|\Big(\sum_{s:s_p = t}h_s(x)\Big) - h_t(x)\right|^q\right)\nu^{-q}(x)\dx  \\
& \left. \quad\quad\quad  + 2^{q-1}\int_{U_a}\left(|f_a(x)|^q + \left|\sum_{s:s_p = a}h_s(x)\right|^q\right)\nu^{-q}(x)\dx\right.\\
&= 2^{q-1}\left\{\sum_{t\in\Gamma} \int_{U_t}|f_t(x)|^q\nu^{-q}(x)\dx\right. \\ 
&\left. \quad\quad\quad  + \sum_{t\in\Gamma^*}\int_{U_t}\Big(|h_t(x)|^q+\sum_{s:s_p=t}|h_s(x)|^q\Big)\nu^{-q}(x)\dx\right.\\
&\left.\quad\quad\quad +\int_{U_a}\sum_{s:s_p=a}|h_s(x)|^q \nu^{-q}(x) \dx \right\}\\
&\le 2^{q-1}\sum_{t\in\Gamma} \int_{U_t}|f_t(x)|^q\nu^{-q}(x)\dx +2^{q}\sum_{t\in \Gamma^*}\int_{\Omega}|h_t(x)|^q\nu^{-q}(x)\dx\\
&\le  2^{q-1} \int_{\Omega}|g(x)|^q\nu^{-q}(x)\dx + 2^{q}\sum_{t\in \Gamma^*}\int_{\Omega}|h_t(x)|^q\nu^{-q}(x)\dx \\
&= (I)+(II).
\end{align*} 

The term $(I)$ gives the desired estimate thanks to the embedding   $L^q(\O,\omega^{-q})\hookrightarrow L^q(\O,\nu^{-q})$.  

Now, observe that
\[|h_s(x)|\le \frac{1}{|B_s|}\int_{W_s}\sum_{k\succeq s} |f_k| \le \frac{1}{|B_s|}\int_{W_s}|g|.\] 
Thus, 
 \begin{align*}
(II)& \le C\sum_{t\in\Gamma^*}\nu_t^{-q}|B_t|^{1-q}\Big(\int_{W_t}|g(x)|\dx\Big)^{q} 
\\ &\le  C\sum_{t\in\Gamma^*}\nu_t^{-q}|B_t|^{1-q}\Big(\sum_{s\succeq t}\int_{U_s}|g(x)|\dx\Big)^q
\end{align*} 
Now, we apply \eqref{dHardy dual} with $b_s=\int_{U_s}|g|$, obtaining: 
 \begin{align*}
(II)&\le C\sum_{t\in\Gamma^*}\omega_t^{-q}|B_t|^{1-q}\Big(\int_{U_t}|g(x)|\dx\Big)^{q} \\ 
&\le C\sum_{t\in\Gamma^*}\omega_t^{-q}|B_t|^{1-q}\Big(\int_{U_t} \omega(x)^{p}\dx\Big)^{\frac{q}{p}}\Big(\int_{U_t}|g(x)|^q\omega^{-q}(x)\dx\Big)\\
&\le C\sum_{t\in\Gamma^*} \omega_t^{-q}|B_t|^{1-q}\omega_t^{q}|U_t|^{\frac{q}{p}}\Big(\int_{U_t}|g(x)|^q\omega^{-q}(x)\dx\Big) \\
&\le C\sum_{t\in\Gamma^*}\int_{U_t}|g(x)|^q\omega^{-q}(x)\dx,
\end{align*} 
which completes the proof. The constant $C$ depends on the constants in \eqref{admissible} for $\omega$ and $\nu$, on the constant $N$ that appears on Definition \ref{Decomp of Omega}, on $\sup_s\frac{|U_s|}{|B_s|}$ and linearly on the constant on \eqref{dHardy dual}.
\qed\end{proof}

\begin{remark}\label{rmk integrability cont} 
Observe that, combining Remark \ref{rmk weights} with Remark \ref{rmk integrability}, it is easy to check that the requirement $\omega^p\in L^1(\Omega)$ is implied by the validity of the Hardy-type inequality \eqref{dHardy dual}.
\end{remark}

\section{Decomposition on H\"older domains}
\label{Holder}
In this section, we prove that a $\mathcal{C}$-orthogonal decomposition of a function $g$ as the one given in Theorem \eqref{Decomp Thm} can be obtained when $\Omega$ is a H\"older-$\alpha$ domain, and the weights are powers of the distance to $\partial\Omega$. 

Let $\Omega$ be a bounded domain whose boundary is locally the graph of a function $\varphi$ that verifies: $|\varphi(x)-\varphi(y)|\le K_\varphi|x-y|^\alpha$ for all $x,y$. Our approach follows the construction given in \cite[Section 6]{L1}.

Let $\varphi:(-\frac{3\ell}{2},\frac{3\ell}{2})^{n-1}\to \R$ be a H\"older-$\alpha$ function with $0<\alpha\le 1$ and $\ell>0$. We also assume that $2\ell\le \varphi<3\ell$. Consider:
\begin{equation}\label{def Omega phi}
    \Omega_\varphi = \Big\{(x',x_n)\in \big(-\tfrac{\ell}{2},\tfrac{\ell}{2}\big)^{n-1}\times \R,\;0<x_n<\varphi(x')\Big\}.
\end{equation}
We could assume $\Omega$ is locally $\Omega_\varphi$, but in that case, the distance to $\partial\Omega$ is not necessarily equivalent to the distance to the portion of the graph of $\varphi$ above $(-\frac{\ell}{2},\frac{\ell}{2})^{n-1}.$ Thus, in order to solve this problem, we assume $\Omega$ is locally an expanded version of $\Omega_\varphi$:
\begin{equation}\label{def Omega phi E}
\Omega_{\varphi,E} = \Big\{(x',x_n)\in \big(-\tfrac{3\ell}{2},\tfrac{3\ell}{2}\big)^{n-1}\times \R,\;0<x_n<\varphi(x')\Big\}.
\end{equation}
Now, for $x\in \Omega_\varphi$, the distance to $\partial\Omega$ is equivalent to the distance to:
\[G = \Big\{(x',x_n)\in \big(-\tfrac{3\ell}{2},\tfrac{3\ell}{2}\big)^{n-1}\times \R,\;x_n=\varphi(x')\Big\}.\]

We denote $d_G$ the distance to $G$. Now, we can prove our first result regarding H\"older-$\alpha$ domains, namely:

\begin{lemma}\label{lemma Holder}
Let $\Omega_\varphi$ be the domain defined  in \eqref{def Omega phi} for some $0<\alpha\le 1$, and $\beta$ satisfying:
\begin{equation}\label{cond beta}
\beta p>-\alpha.
\end{equation}
Then, given $f$ in $L^q(\Omega_\varphi,d_G^{-\beta q})$ with vanishing mean value, there exists a $\mathcal{C}-$decomposition of $f$ that satisfies estimate \eqref{Decomp estim} with $\omega = d_G^{\beta}$ and $\nu=d_G^{\beta+\alpha-1}$.
\end{lemma}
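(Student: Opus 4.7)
The plan is to apply Theorem \ref{Decomp Thm} with a suitable tree covering of $\Omega_\varphi$ and the weights $\omega = d_G^{\beta}$ and $\nu = d_G^{\beta+\alpha-1}$. First, I would adopt the Whitney-type construction from \cite[Section 6]{L1}, obtaining an open covering $\{U_t\}_{t\in\Gamma}$ where each $U_t$ is a rectangular cell on which $d_G$ is comparable to a constant of order $\diam(U_t)$, together with pairwise disjoint sets $B_t \subseteq U_t \cap U_{t_p}$ and $|U_t|/|B_t|$ uniformly bounded. The tree structure is organized so that the root is an interior cell and descendants march towards the graph $G$.

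With this in place, several hypotheses of Theorem \ref{Decomp Thm} are essentially automatic. Admissibility of $\omega$ and $\nu$ follows from the fact that $d_G$ is comparable to a constant on each $U_t$. The embedding $L^q(\Omega_\varphi,\omega^{-q}) \hookrightarrow L^q(\Omega_\varphi,\nu^{-q})$ reduces to the pointwise inequality $\nu \ge c\, \omega$; since $\nu/\omega = d_G^{\alpha-1}$ with $\alpha-1 \le 0$ and $d_G$ bounded above on $\Omega_\varphi$, this is automatic. Integrability $\omega^p = d_G^{\beta p} \in L^1(\Omega_\varphi)$ follows from $\beta p > -\alpha \ge -1$ and Fubini, using that $d_G(x',x_n) \approx \varphi(x') - x_n$ near the graph.

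The core of the proof is the verification of the weighted discrete Hardy inequality \eqref{dHardy dual}. By Remark \ref{rmk weights}, this amounts to \eqref{dHardy dual general} with $u_t = |B_t|^{1/p}\nu_t$ and $v_t = |B_t|^{1/p}\omega_t$, and I would establish it through the sufficient condition \eqref{tree cond} of Theorem \ref{suff cond}. Indexing cells by their ``level'' $k$ (so that $\diam(U_t) \approx d_G \approx 2^{-k}$ and $|B_t| \approx 2^{-kn}$), one computes
\[u_t^{-q} \approx 2^{k q (n/p + \beta + \alpha - 1)}, \qquad v_t^p \approx 2^{-k(n + \beta p)}.\]
Path sums $\sum_{a \prec s \preceq t} u_s^{-q}$ are geometric in $k$; shadow sums $\sum_{s \succeq t}$ require counting how many level-$k$ descendants a level-$k_0$ cell has. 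In the H\"older-$\alpha$ setting, this count grows like $2^{(k-k_0)(n-1)}$ horizontally together with an additional vertical factor controlled by the variation of $\varphi$, and it is exactly the balance between this descendant count and $v_t^p$ that produces the threshold $\beta p > -\alpha$.

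The main obstacle is the bookkeeping in this last step: pinning down the correct descendant count per level for a H\"older-$\alpha$ graph in the tree covering, and then tuning $\theta > 1$ in \eqref{tree cond} so that the path factor and the shadow factor are \emph{simultaneously} finite under the single hypothesis $\beta p > -\alpha$. The parameter $\theta$ is critical in borderline cases where the path sum is not dominated by its deepest term, and this is precisely where Theorem \ref{suff cond} improves on the chain condition \eqref{chain cond}.
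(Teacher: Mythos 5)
Your proposal follows essentially the same route as the paper: the level-by-level Whitney-type tree covering from \cite[Section 6]{L1}, verification of the admissibility, embedding, and integrability hypotheses of Theorem \ref{Decomp Thm}, and then the sufficient condition \eqref{tree cond} via counting cubes of each dyadic size in paths (where same-size cubes can repeat up to $2^{i(1-\alpha)}$ times) and in shadows (horizontal factor $2^{(i-k)(n-1)}$ times the same vertical factor), with $\theta$ chosen close to $1$ so that the shadow exponent stays negative under $\beta p>-\alpha$. The outline is correct and the "bookkeeping" you defer is exactly the computation the paper carries out.
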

\begin{proof}
We build a tree covering of $\Omega_\varphi$ and prove that Theorem \ref{Decomp Thm} holds on it. The main idea is to give a Whitney-type decomposition of  $\Omega_\varphi$ into cubes that satisfy:
\begin{itemize}
\item The edge $\ell_t$ of a cube $Q_t$ is proportional to $d_G(Q_t)$.
\item Two adjacent cubes have comparable sizes. 
\end{itemize}
The cubes are constructed level by level, moving upward towards the graph of $\varphi$. The level $0$ is given by the root cube $Q_a=(-\tfrac{\ell}{2},\tfrac{\ell}{2})^{n-1}\times(0,\ell)$. The other cubes are built recursively. Suppose that $Q_t=Q_t'\times(x_{n,t}^1,x_{n,t}^2)$ is a cube of level $m$. Then, cubes $Q_s$ in level $m+1$, with $s_p=t$, are defined in the following way: consider the cube $Q = 3(Q_t + (0,\dots,0,\ell_t))$, which denotes an expansion of a translated copy of $Q_t$. Then: 
\begin{itemize}
  \item If $Q\subset\Omega_{\varphi,E}$, then we define only one cube $Q_s$ at level $m+1$ with $s_p=t$, $Q_s=Q_t+(0,\dots,0,\ell_t)$.
  \item If $Q\nsubset \Omega_{\varphi,E}$, we define $2^{n-1}$ cubes $Q_s$ at level $m+1$ with $s_p=t$, written as $Q_s = \widetilde{Q}_{t}'\times(x_{n,t}^2,x_{n,t}^2+\tfrac{\ell_t}{2})$, where $\widetilde{Q}_{t}'$ is one of the $(n-1)$-dimensional cubes given by the partition of $Q_t'$ into $2^{n-1}$ cubes with edges of length $\tfrac{\ell_t}{2}$. 
\end{itemize}
See Figure \ref{fig tree} for an example of this construction. 
\begin{figure}[!ht]
\includegraphics[scale=0.4]{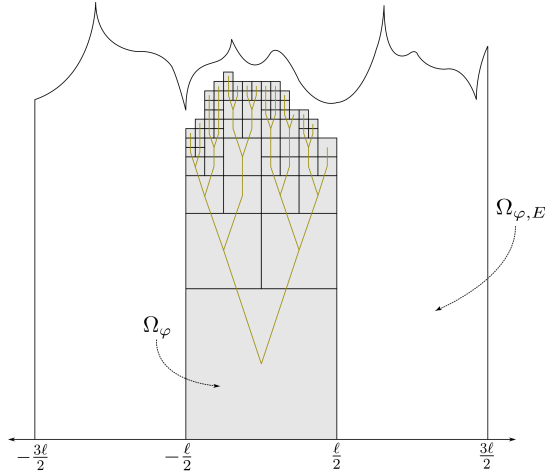}
\caption{Partial representation of a partition of $\Omega_\varphi$ into cubes.}\label{fig tree}
\end{figure}
It is easy to check that this partition satisfies the two main properties of a Whitney decomposition mentioned above. Recall that in a tree covering a certain overlapping of the elements is needed, but our cubes are pairwise disjoint, so we need to enlarge them. If $Q_t= Q_t'\times (x_{n,t}^1,x_{n,t}^2)$, we can expand it downward with a half of itself, defining $U_t = Q_t'\times(x_{n,t}^1-\tfrac{\ell_t}{2},x_{n,t}^2)$. Now $\{U_t\}_t$ is a tree covering, with $B_t = Q_t'\times(x_{n,t}
^1-\frac{\ell_t}{2},x_{n,t}^1)$. We denote $\Gamma$ the underlying set of indices with tree structure. 

Now, we need to prove that Theorem \eqref{Decomp Thm} holds for the weights $ \omega=d_G^\beta$ and $\nu=d_G^{\beta+\alpha-1}$.  Notice that the tree covering defined above satisfies that $\frac{|U_t|}{|B_t|} = 3/2$ for every $t\in\Gamma^*$. Moreover, observe that $L^q(\Omega_{\varphi},d_G^{-(\beta+\alpha-1)})\hookrightarrow L^q(\Omega_{\varphi  q},d_G^{-\beta q})$ since $\beta\geq \beta+\alpha-1$. Notice that, by construction:
\[\max d_G(z) \le C\min d_G(z) \sim \ell_t \quad \forall z\in Q_t,\]
which implies that the weights $\omega$ and $\nu$ are admissible.
Thus, it is enough to prove the Hardy-type inequality \eqref{dHardy dual} and the integrability of the weight $\omega^{p}$. 

As we mentioned in Remark \ref{rmk weights}, \eqref{dHardy dual} is equivalent to \eqref{dHardy dual general} and \eqref{dHardy primal} with
$u_t=|B_t|^\frac{1}{p}\ell_t^{\beta+\alpha-1}$ and $v_t=|B_t|^\frac{1}{p}\ell_t^\beta$. The rest of the proof is devoted to verifying the sufficient condition \eqref{tree cond} for these weights.

Without loss of generality, we assume that $\ell=1$, and thus the edge of every cube is $2^{-j}$ for some $j\in\mathbb{N}$. Since \eqref{tree cond} involves summations over the shadow of a node ($\mathcal{S}_t$), and over the path that goes from $a$ to $t$ ($\mathcal{P}_t$), we begin by estimating the number of cubes of a given size both in $\mathcal{S}_t$ and $\mathcal{P}_t$.

Consider a cube $Q_t$, and take $x'_t\in Q_t'$. Let us take $Q_s$ the first cube going backwards from $Q_t$, such that $s\preceq t$ and $\ell_s = 2\ell_t$. We have that $3(Q_s + (0,\dots,0,\ell_s))\nsubset\Omega_{\varphi_E}$. Hence, there is some $x'_s\in 3Q_s'$ such that $\varphi(x'_s)\le x_{n,s}^2+2\ell_s\le x_{n,t}^1+2\ell_s=x_{n,t}^1+4\ell_t$ (see Figure \ref{fig cubes} left). Now, for every $x'_t\in Q_t'$:
 \begin{align*}
    |\varphi(x'_t)|&\le |\varphi(x'_t)-\varphi(x'_s)| + |\varphi(x'_s)|\le K_\varphi|x'_t-x'_s|^\alpha + x_{n,t}^1+4\ell_t \\
    &\le C_{\alpha}K_\varphi \ell_t^\alpha + 4\ell_{t}+x_{n,t}^1.
\end{align*} 
Now, let us consider $W_t=\bigcup_{k\succeq t}U_k$, the union of all the cubes in the shadow of $U_t$ (which we also called \emph{shadow}). Then, the above estimate gives: 
\[|W_t|\le \ell_t^{(n-1)}(C_{\alpha}K_\varphi \ell_t^\alpha + 4\ell_{t}) \le C_{n,\alpha} \ell_t^{n-1+\alpha}(K_\varphi +\ell_t^{1-\alpha}).\]

Finally, for $k\in\mathbb{N}$, let us denote $\mathbb{P}_i(t)$ and $\mathbb{W}_i(t)$ the number of cubes of size $2^{-i}$ in $\mathcal{P}_t$ and $\mathcal{S}_t$ respectively. Namely:
 \begin{align*}
\mathbb{P}_i(t) &= \#\{r\in\Gamma:\; r\preceq t, \,\ell_r=2^{-i}\}\\
\mathbb{W}_i(t) &= \#\{r\in\Gamma:\; r\succeq t, \,\ell_r=2^{-i}\}
\end{align*} 
We want to estimate both of these quantities. For $\mathbb{P}_i(t)$, we can take $r\in\Gamma$ the lowest index in $\mathcal{P}_t$ such that $\ell_r = 2^{-i}$, and consider $W_r$. $\mathbb{P}_i(t)$ is at most the number of cubes with edges $\ell_r$ in $W_r$. Hence:
\[\mathbb{P}_i(t) \le \frac{|W_r|}{|Q_r|} \le C_{n,\alpha} \ell_r^{n-1+\alpha}(K_\varphi +\ell_r^{1-\alpha})\ell_r^{-n} \le C \ell_r^{-1+\alpha} = C2^{i(1-\alpha)}.\]
Observe that, in particular, this is an estimate for the number of cubes of the same size in a chain of cubes.  
\begin{figure}\label{fig cubes}
\includegraphics[scale=0.5]{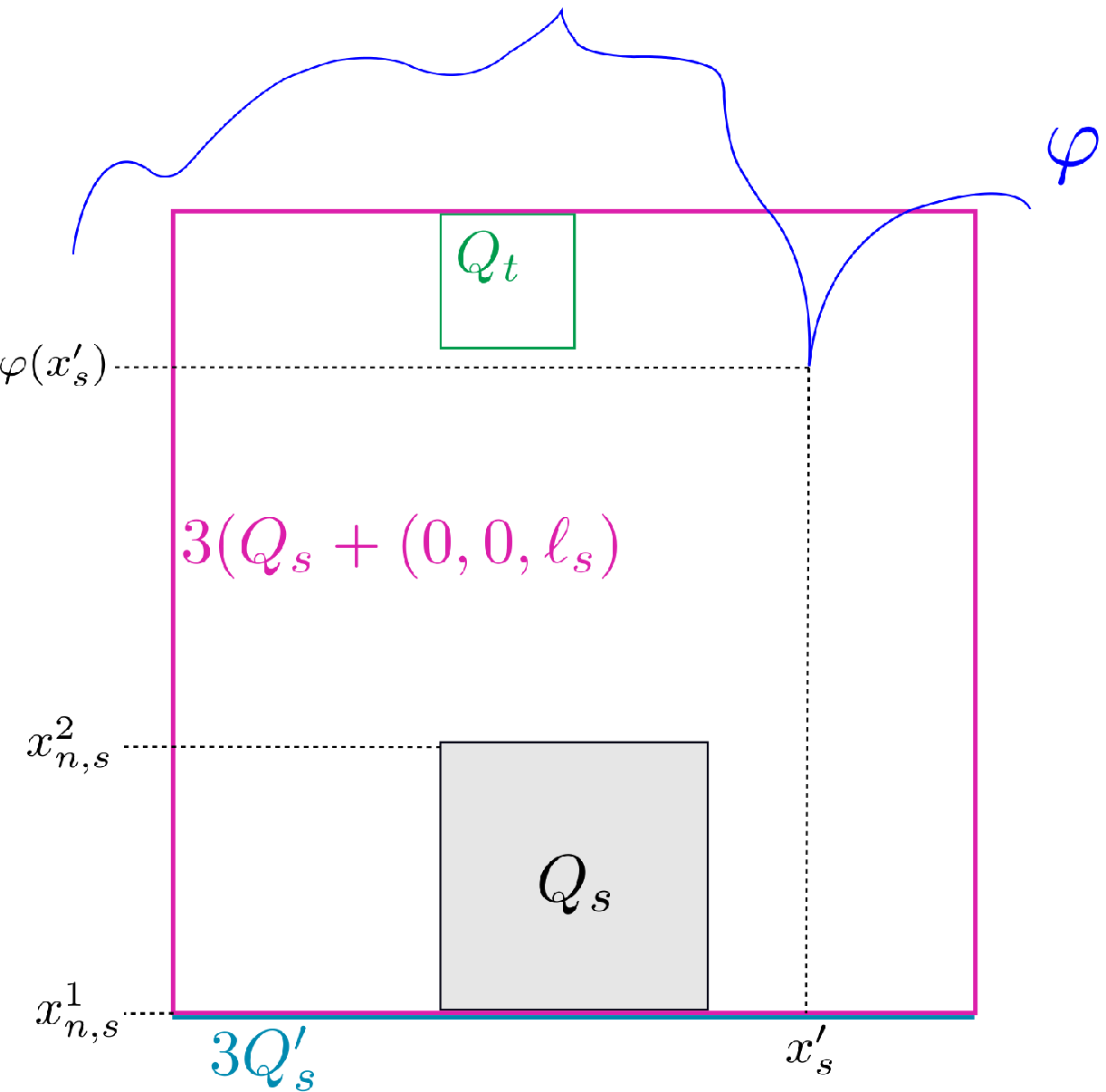} \hspace{2cm}
\includegraphics[scale=0.5]{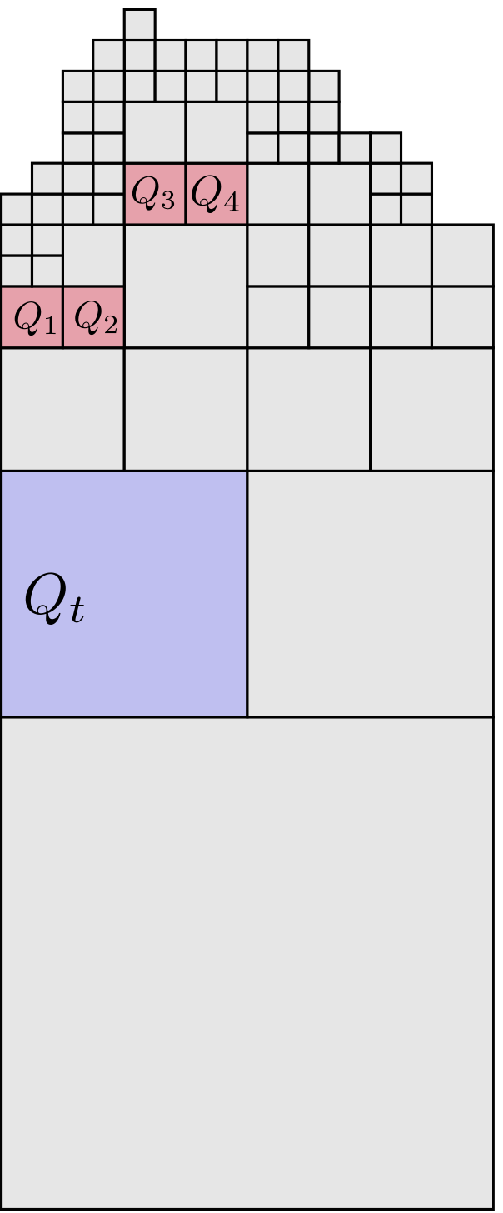}
\caption{Left: given $Q_t$, we can estimate $\varphi(x'_t)$ in terms of $\ell_t$. Right: a cube $Q_t$ and the first cubes with edges of length $2^{-k}$ in its shadow.}
\end{figure}
On the other hand, for $\mathbb{W}_i(t)$, we assume $\ell(t) = 2^{-k}$, and consider the set of the first cubes $Q_r$ such that $r\succeq t$ and $\ell_r=2^{-i}$. In Figure \ref{fig cubes} (right)  a cube $Q_t$ is shown, along with the four first cubes of a certain size in $W_t$. There are $\frac{\ell_t^{n-1}}{2^{-i(n-1)}}$ of such cubes. Moreover each of these cubes can be followed by a chain containing at most: $2^{-i(-1+\alpha)}$ cubes. Therefore:
\[\mathbb{W}_i(t) \le \ell_t^{n-1} 2^{-i(-1+\alpha-n+1)} =\ell_t^{n-1} 2^{-i(\alpha-n)} = 2^{-k(n-1)-i(\alpha-n)}.\]

Finally, we can prove sufficient condition \eqref{tree cond}. We have three indices in $\Gamma$: $r$, $s$ and $t$, for which we assume: $\ell_r=2^{-i}$, $\ell_s=2^{-j}$, $\ell_t=2^{-k}$. Hence:
 \begin{align*}
\sum_{a\prec r\preceq t} u_r^{-q} &= \sum_{a\prec r\preceq t}|B_r|^{-\frac{q}{p}}d_G^{-q(\beta+\alpha-1)} = \sum_{i=0}^k \mathbb{P}_i(t) (2^{-in})^{-\frac{q}{p}} 2^{iq(\beta+\alpha-1)} \\
&\le \sum_{i=0}^k 2^{i(1-\alpha+n\frac{q}{p}+q(\beta+\alpha-1))}\le C 2^{kq(\frac{n-1+\alpha}{p}+\beta)}
\end{align*} 
In the last step we used that the exponent is positive. Indeed: 
\[\frac{n-1+\alpha}{p}+\beta = \beta+\frac{\alpha}{p} + \frac{n-1}{p}>0,\]
since $\beta p> -\alpha$. In the same way, we obtain that:
 \begin{align*}
\sum_{a\prec r\preceq s} u_r^{-q}&\le C 
2^{jq(\frac{n-1+\alpha}{p}+\beta)}
\end{align*} 
Let us denote:
\[I_t = \Big(\sum_{a\prec s\preceq t}u_s^{-q}\Big)^\frac{1}{\theta q}\Big(\sum_{s\succeq t} v_s^p\Big(\sum_{a\prec r\preceq s}u_r^{-q}\Big)^{\frac{p}{q}(1-\frac{1}{\theta})}\Big)^\frac{1}{p}\]
Then:
 \begin{align*}
I_t &\le 2^{k(\frac{n-1+\alpha}{p}+\beta)\frac{1}{\theta}} 
\Big(\sum_{s\succeq t}|B_s|d_G^{\beta p}\Big(\sum_{a\prec r\preceq s} u_r^{-q}\Big)^{\frac{p}{q}(1-\frac{1}{\theta})}\Big)^\frac{1}{p} \\
&\le 2^{k(\frac{n-1+\alpha}{p}+\beta)\frac{1}{\theta}}  \Big(\sum_{j=k}^\infty\mathbb{W}_j(t) 2^{-jn}2^{-j\beta p}2^{j(\frac{n-1+\alpha}{p}+\beta)p(1-\frac{1}{\theta})}\Big)^\frac{1}{p}\\
&\le 2^{k(\frac{n-1+\alpha}{p}+\beta)\frac{1}{\theta}} 
\Big(\sum_{j=k}^\infty 2^{-k(n-1)-j(\alpha-n)} 2^{-j(n+\beta p-(\frac{n-1+\alpha}{p}+\beta)p(1-\frac{1}{\theta}))}\Big)^\frac{1}{p} \\
&\le 2^{k(\frac{n-1+\alpha}{p}+\beta)\frac{1}{\theta}}  2^{-k(n-1)\frac{1}{p}}\Big(\sum_{j=k}^\infty 2^{-j(\alpha+\beta p-(\frac{n-1+\alpha}{p}+\beta)p(1-\frac{1}{\theta}))}\Big)^\frac{1}{p}
\end{align*} 
For the summation to be finite, we need the exponent of $2$ to be negative or equivalently:
 \begin{align*}
\alpha+\beta p-\Big(\frac{n-1+\alpha}{p}+\beta\Big)p\Big(1-\frac{1}{\theta}\Big)&>0\\
\end{align*} 
But \eqref{cond beta} implies $\alpha+\beta p>0$, so we can choose $\theta>1$ such that inequality above remains valid. Now, we can continue the estimate:
 \begin{align*}
    I_t &\le 2^{k(\frac{n-1+\alpha}{p}+\beta)\frac{1}{\theta}}  2^{-k(n-1)\frac{1}{p}} 2^{-k(\alpha+\beta p-(\frac{n-1+\alpha}{p}+\beta)p(1-\frac{1}{\theta}))\frac{1}{p}}\\
    &\le 2^{k[(\frac{n-1+\alpha}{p}+\beta)\frac{1}{\theta}-\frac{n-1}{p}-\frac{\alpha}{p}-\beta+(\frac{n-1+\alpha}{p}+\beta)(1-\frac{1}{\theta}) ]}
\end{align*} 
Since $A_{tree}$ is the supremum of $I_t$ over $t$, we need $I_t$ to be bounded uniformly on $t$, which is to say on $k$, hence we need:
\[
E = \Big(\frac{n-1+\alpha}{p}+\beta\Big)\frac{1}{\theta}-\frac{n-1+\alpha}{p}-\beta+\Big(\frac{n-1+\alpha}{p}+\beta\Big)\Big(1-\frac{1}{\theta}\Big)\le0
\]
But it is easy to check that, in fact, $E=0$. 

Finally, as we mentioned in Remark \ref{rmk integrability cont} the integrability of the weight $\omega^p$ is implied by the sufficient condition for the Hardy-type inequality \eqref{dHardy dual}. Indeed:
 \begin{align*}
\int_\Omega \omega^p(x) \dx &\le \sum_{t\in\Gamma}\int_{U_t}\omega^p(x)\dx 
\le \sum_{t\in\Gamma} |U_t|\omega_t^p 
\le C\sum_{k=0}^\infty \mathbb{W}_k(a) 2^{-kn}2^{-k\beta p} \\
&\le C\sum_{k=0}^\infty 2^{-k(\alpha-n)}2^{-kn}2^{-k\beta p} 
= C\sum_{k=0}^\infty 2^{-k(\alpha+\beta p)}
\end{align*} 
which is finite, since $\beta p > -\alpha$. 
\qed\end{proof}

\begin{remark}
The previous lemma was stated assuming a certain fixed shift in the exponents of the weights. However, one can prefer to consider the general case, with two different weights $\nu = d_G^{\gamma}$ and $\omega = d_G^{\beta}$. In that case, the proof of Lemma \ref{lemma Holder} can be reproduced verbatim until the last step, where the exponent $E$ in the estimate of $I_t$ should be studied. The requirement $E\le 0$ is not automatically fulfilled, but implies the restriction $\gamma\le \beta+\alpha-1$, where the natural shift between the weights becomes apparent. In order to simplify the proof, we stated the lemma in the critical case $\gamma=\beta+\alpha-1$, which is the most useful.
\end{remark}

Lemma \ref{lemma Holder} constitutes the core of the decomposition on H\"older-$\alpha$ domains. In order to extend this result to a complete H\"older domain, we just need to cover it with patches given by rectangles of the form of $\Omega_\varphi$: 

\begin{theorem}\label{teo Holder decomp}
Let $\Omega$ be a H\"older-$\alpha$ bounded domain, with $0<\alpha\le 1$, and $\beta$ satisfying that $\beta p>-\alpha$. Then, given $g\in L^q(\Omega,d^{-\beta q})$, with vanishing mean value, there exists a {$\mathcal{C}-$decomposition} of $g$ subordinate to a partition $\{U_t\}_{t\in\Gamma}$ of $\Omega$ that satisfies:
\begin{equation}\label{decomp estim Holder}
\sum_{t\in\Gamma} \int_{U_t}|g_t(x)|^q d^{(-\beta-\alpha+1) q }(x) \dx \leq C \int_{\Omega} |g(x)|^q d^{-\beta q}(x)\dx.
\end{equation}
In addition, the partition is formed by one smooth domain $\Omega_0$, with positive distance to $\partial\Omega$, and denumerable cubes or cubes extended by a factor $3/2$ in one direction.  

\end{theorem}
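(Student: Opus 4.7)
The plan is to reduce the global problem to finitely many local problems of the form already treated in Lemma \ref{lemma Holder}. Using compactness of $\partial\Omega$ together with the definition of a H\"older-$\alpha$ domain, I would first extract finitely many rigid motions $T_1,\dots,T_N$ and H\"older-$\alpha$ functions $\varphi_1,\dots,\varphi_N$ such that each $\Omega_i:=T_i^{-1}(\Omega_{\varphi_i})$ sits inside $\Omega$, the union $\bigcup_{i=1}^N \Omega_i$ covers a neighborhood of $\partial\Omega$, and on each $T_i^{-1}(\Omega_{\varphi_i,E})$ the distances $d$ and $d_{G_i}:=d_G\circ T_i$ are comparable. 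I would then add a smooth interior subdomain $\Omega_0$ with $\dist(\Omega_0,\partial\Omega)>0$, arranged so that $\{\Omega_i\}_{i=0}^N$ covers $\Omega$ and consecutive patches overlap on open sets $\widetilde B_i$ lying in the smooth interior at positive distance from $\partial\Omega$.

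The first step is a \emph{finite}, first-level $\mathcal{C}$-decomposition $g = g_0+g_1+\dots+g_N$ with $\supp(g_i)\subset\Omega_i$ and $\int g_i=0$. I would perform this exactly as in the proof of Theorem \ref{Decomp Thm}, using a partition of unity subordinate to $\{\Omega_i\}$ and correction functions $h_i$ supported on the overlap sets $\widetilde B_i$. The crucial observation is that the admissibility of the weight on each patch is not needed here: the Hardy-type inequality \eqref{dHardy dual} is trivial on a finite tree, and the correction terms $h_i$ live on $\widetilde B_i$, where $d^{-\beta q}$ is bounded above and below. A direct estimate combined with the integrability of $d^{\beta p}$ on $\Omega$ (which holds thanks to $\beta p>-\alpha$, by the same argument as at the end of Lemma \ref{lemma Holder}) yields
\[
\sum_{i=0}^{N}\int_{\Omega_i}|g_i(x)|^q d(x)^{-\beta q}\dx \le C \int_\Omega |g(x)|^q d(x)^{-\beta q}\dx.
\]

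Next, for each $i\ge 1$, I would pull back $g_i$ by $T_i$ and apply Lemma \ref{lemma Holder} on $\Omega_{\varphi_i}$, producing a countable $\mathcal{C}$-decomposition $g_i = \sum_{t\in\Gamma_i^*} g_{i,t}$ subordinate to a tree-covering by cubes extended by a factor $3/2$ in one direction, with
\[
\sum_{t\in\Gamma_i^*}\int_{U_{i,t}}|g_{i,t}(x)|^q d_{G_i}(x)^{(-\beta-\alpha+1)q}\dx \le C \int_{\Omega_i}|g_i(x)|^q d_{G_i}(x)^{-\beta q}\dx.
\]
Since $d\sim d_{G_i}$ on $\Omega_i$, both weights can be replaced by powers of $d$ at the cost of a constant depending only on the H\"older data. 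For $i=0$, I would simply set $g_{0,0}:=g_0$ and $U_{0,0}:=\Omega_0$; since $d$ is bounded above and below on $\Omega_0$, no further decomposition is needed and the weighted estimate follows trivially from the first-level bound.

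Concatenating the two layers produces a $\mathcal{C}$-decomposition of $g$ subordinate to $\{\Omega_0\}\cup\bigcup_{i=1}^N\{U_{i,t}\}_{t\in\Gamma_i^*}$, and summing the estimates from the two layers gives \eqref{decomp estim Holder}. The main obstacle, as indicated above, is carrying out the first-level decomposition despite the fact that $d^{-\beta}$ is not admissible on the patches $\Omega_i$ (each of which touches $\partial\Omega$); this is overcome by localizing the correction terms to the deep-interior overlap sets $\widetilde B_i$, where the weight is harmless, and by exploiting the triviality of the discrete Hardy inequality on the finite first-level tree. A minor technical point is to choose the covering so that the comparison $d\sim d_{G_i}$ is uniform across the $N$ patches.
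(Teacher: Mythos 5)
Your proposal is correct and follows essentially the same route as the paper: a finite covering of $\partial\Omega$ by patches of the form $\Omega_\varphi$ (with the expanded versions guaranteeing $d\sim d_{G_i}$) plus an interior smooth $\Omega_0$, a finite first-level $\mathcal{C}$-decomposition whose correction terms are harmless because $d^{\beta p}\in L^1(\Omega)$ and the overlaps can be placed at positive distance from $\partial\Omega$, and then Lemma \ref{lemma Holder} on each boundary patch. The only (cosmetic) difference is that the paper performs the finite first-level step via Bogov\-skii's explicit inductive two-set splitting rather than the finite-tree instance of Theorem \ref{Decomp Thm}, but the two mechanisms are the same idea.
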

\begin{proof}
Let us begin by covering $\partial\Omega$ with a finite number of open sets $\{\mathcal{U}_i\}_{i}$ for $i=1,\dots,m$, such that $\Omega_i = \mathcal{U}_i\cap \Omega$ are of the form of $\Omega_\varphi$, defined in  \eqref{def Omega phi}. We may assume also that there are open sets $\mathcal{V}_i$ such that $\mathcal{V}_i\cap\Omega$ are of the form of $\Omega_{\varphi,E}$, defined in \eqref{def Omega phi E}. Then, we take a smooth domain $\Omega_0$ that intersects each $\Omega_i$, with $1\leq i\leq m$, and such that $d(\Omega_0,\partial\Omega)\geq\delta>0$, and $\bigcup_{i=0}^m \Omega_i=\Omega$.

We continue by using the idea by Bogovskii in \cite{Bogovskii} for a finite partition and Lemma \ref{lemma Holder} in each $\Omega_i$, with $1\leq i\leq m$. Indeed, let us apply an inductive argument: given two sets $A,B\subset\Omega$ such that $|A\cap B|>0$ and a function $f\in L^q(\Omega,d^{-\beta q})$ such that $\int_{A\cup B}f=0$. Then, we can decompose $f$ in $A\cup B$ in the following way:
\[f(x) = \underbrace{\chi_A(x)f(x)-\frac{\chi_{A\cap B}(x)}{|A\cap B|}\int_A f}_{f_A(x)} 
        + \underbrace{\frac{\chi_{A\cap B}(x)}{|A\cap B|}\int_B f+\chi_{B\setminus A}(x) f(x)}_{f_B(x)}.\]
From the integrability of $d^{\beta p}$, the functions $f_A$ and $f_B$ are well-defined and have finite norms in $L^q(\Omega,d^{-\beta q})$. Thus, there is a constant $C$ such that:
\[\|f_A\|_{L^q(\Omega,d^{-\beta q})}+\|f_B\|_{L^q(\Omega,d^{-\beta q})}\le C \|f\|_{L^q(\Omega,d^{-\beta q})}.\] Furthermore, it is easy to check that $f_A$ and $f_B$ are supported in $A$ and $B$ respectively and that both has vanishing mean value.

Next, we can apply this argument with $A= \Omega_m$ and $B=\bigcup_{i=0}^{m-1}\Omega_i$, and then again with $A=\Omega_{m-1}$ and $B=\bigcup_{i=0}^{m-2}\Omega_i$, etc. Therefore, we obtain for every $g\in L^q(\Omega,d^{-\beta q})$ with vanishing mean value on $\Omega$, a decomposition:
$g = \sum_{i=0}^m g_i$, such that $g_i$ is supported on $\Omega_i$ and has vanishing mean value, with the estimate \[\sum_{i=0}^m \|g_i \|^q_{L^q(\Omega,d^{-\beta q})} \le C\|g\|^q_{L^q(\Omega,d^{-\beta q})}.\]  

Now, each $g_i$ for $i=1,\dots,m$ can be decomposed applying Lemma \ref{lemma Holder} with estimate \eqref{Decomp estim}. And, using that $d(\Omega_0,\partial\Omega)\geq\delta>0$, we have:
\[\|g_0 \|_{L^q(\Omega,d^{(-\beta-\alpha+1)q})} \le C\|g_0\|_{L^q(\Omega,d^{-\beta q})},\]
which completes the proof.
\qed\end{proof}

\section{Applications to inequalities on H\"older domains}
\label{Applications}
\setcounter{equation}{0}

In this section we present several results regarding different inequalities on H\"older-$\alpha$ domains. In all the cases the proof follows a similar model: given a function $f$ with vanishing mean value on $\Omega$, we consider a partition $\{U_t\}_{t\in\Gamma}$, as the one provided by Theorem \ref{teo Holder decomp} and apply the decomposition to $f$. Then, we apply an unweighted version of the inequality on each $U_t$, for $t\in\Gamma$, and take advantage of the estimate \eqref{decomp estim Holder} for recovering a global norm. For doing this we rely heavily on the fact that the distance to $\partial\Omega$, $d(x)$, can be regarded as constant over each $U_t$. In other words, we can define values $d_t$ such that $d_t\sim d(x),\,\forall x\in U_t$, where the constants involved in the proportionality are independent of $t$.  Moreover, we have that each $U_t$ is either a smooth domain ($\Omega_0$ in the proof of Theorem \ref{teo Holder decomp}), or a cube or a cube expanded along one direction by a factor $\frac{3}{2}$. For this simple domains, we can control the constant involved in the unweighted inequality.

The divergence problem is solved \emph{directly}: we apply the decomposition to the data $f$. For the other results a duality characterization of the norm on the left hand side is used, and the decomposition is applied to the function in the dual space of the one where the function involved in the inequality belongs. For applying this argument we need the lemma below. 

Recall that the weight $d^{\beta p}$ is integrable over $\Omega$. Thus, let us define the following subspace of $L^q(\Omega,d^{-\beta q})$:
 \begin{align*}V:=\Big\{&g(x) +\psi d^{\beta p}(x)\colon \\  & g(x)\in L^q(\Omega,d^{-\beta q}) \text{ and } \psi\in\R, \text{with }\, \overline{\supp(g)}\subset \Omega, \int_\Omega g=0, \Big\}.
\end{align*} 

\begin{lemma}\label{lemma density}
$V$ is dense in $L^q(\Omega,d^{-\beta q})$, and any $g+\psi d^{\beta p}\in V$ verifies that  
\[\|g \|_{L^q(\Omega,d^{-\beta q})} \le 2\|g+\psi d^{\beta p}\|_{L^q(\Omega,d^{-\beta q})}.\]
\end{lemma}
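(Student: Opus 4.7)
The plan is to handle the two assertions separately: the norm inequality follows by extracting $\psi$ algebraically from $h$ and then using Hölder, while density is achieved by first splitting off the mean via $d^{\beta p}$ and then approximating the resulting mean-zero function by compactly supported ones plus a small correction.

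For the norm inequality, given $h = g + \psi d^{\beta p}\in V$, integrating and using $\int_\Omega g=0$ forces
\[
\psi = \frac{\int_\Omega h}{\int_\Omega d^{\beta p}};
\]
note that $d^{\beta p}\in L^1(\Omega)$ since $\beta p>-\alpha$, as already used in Lemma~\ref{lemma Holder} and Remark~\ref{rmk integrability cont}. By the triangle inequality, it suffices to bound $|\psi|\,\|d^{\beta p}\|_{L^q(\Omega,d^{-\beta q})}$ by $\|h\|_{L^q(\Omega,d^{-\beta q})}$. A direct computation using $q(p-1)=p$ yields
\[
\|d^{\beta p}\|_{L^q(\Omega,d^{-\beta q})}=\Big(\int_\Omega d^{\beta p}\Big)^{1/q},
\]
while Hölder applied to $\int_\Omega h = \int_\Omega (h\,d^{-\beta})\cdot d^{\beta}$ gives $|\int_\Omega h|\le \|h\|_{L^q(\Omega,d^{-\beta q})}(\int_\Omega d^{\beta p})^{1/p}$. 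The relation $1/p+1/q=1$ then collapses the product to exactly $\|h\|_{L^q(\Omega,d^{-\beta q})}$, giving the claimed factor of $2$.

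For density, I would first use the embedding $L^q(\Omega,d^{-\beta q})\hookrightarrow L^1(\Omega)$ (another consequence of $d^{\beta p}\in L^1(\Omega)$ via Hölder, already exploited in the proof of Theorem~\ref{Decomp Thm}) to define, for arbitrary $f\in L^q(\Omega,d^{-\beta q})$, the constant $\psi := (\int_\Omega f)/(\int_\Omega d^{\beta p})$ and the mean-zero function $F := f - \psi d^{\beta p}$. It then suffices to approximate $F$ in $L^q(\Omega,d^{-\beta q})$ by functions $g_n$ with $\overline{\supp(g_n)}\subset\Omega$ and $\int_\Omega g_n=0$, since then $g_n + \psi d^{\beta p}\in V$ will converge to $f$. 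Fixing any $\eta\in C_c^\infty(\Omega)$ with $\int_\Omega \eta\neq 0$ and a sequence of smooth cut-offs $\chi_n$ with $\overline{\supp(\chi_n)}\subset\Omega$, $0\le \chi_n\le 1$, and $\chi_n\to 1$ pointwise a.e., I would set
\[
g_n := F\chi_n - \frac{\int_\Omega F\chi_n}{\int_\Omega \eta}\,\eta,
\]
which by construction is compactly supported in $\Omega$ and has vanishing mean value. Convergence $g_n\to F$ in $L^q(\Omega,d^{-\beta q})$ follows from dominated convergence for the first term (bounding $|F\chi_n|^q d^{-\beta q}$ by $|F|^q d^{-\beta q}$) and from $\int_\Omega F\chi_n\to \int_\Omega F=0$ for the coefficient of $\eta$.

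The main potential obstacle is purely bookkeeping: making sure that $F$ indeed belongs to $L^1(\Omega)$ so that the integrals appearing in the correction term are well-defined and dominated convergence applies, and that the explicit computation of $\|d^{\beta p}\|_{L^q(\Omega,d^{-\beta q})}$ lines up with Hölder applied to $\int_\Omega h$ so as to yield the sharp constant $2$ rather than something larger. Both points reduce to the integrability of $d^{\beta p}$, which has already been established under the standing hypothesis $\beta p>-\alpha$.
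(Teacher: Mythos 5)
Your proposal is correct and follows essentially the same route as the paper: the constant $2$ comes from extracting $\psi=\int_\Omega h/\int_\Omega d^{\beta p}$ and cancelling the exponents $1/p+1/q-1=0$ after Hölder, and density is obtained by subtracting $\psi_F d^{\beta p}$, truncating, and restoring the vanishing mean with a fixed compactly supported correction. The only (immaterial) difference is that you correct the mean with a smooth bump $\eta$ and pass to the limit by dominated convergence, whereas the paper uses $\chi_B d^{\beta p}/\int_B d^{\beta p}$ and an explicit $\varepsilon$-choice of $\Omega_\varepsilon$.
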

\begin{proof}
First, let us prove the estimation in the lemma. Notice that \[\psi=\frac{\int_\Omega g+\psi d^{\beta p}}{\int_\Omega d^{\beta p}}.\]

Thus, by using the H\"older inequality we obtain 
 \begin{align*}
\|\psi d^{\beta p}\|_{L^q(\Omega,d^{-\beta q})}
&\leq \frac{\left|\int_\Omega g+\psi d^{\beta p}\right|}{\int_\Omega d^{\beta p}}\|d^{\beta p}\|_{L^q(\Omega,d^{-\beta q})}\\ 
&\leq \left(\int_\Omega d^{\beta p}\right)^{1/p+1/q-1} \|g+\psi d^{\beta p}\|_{L^q(\Omega,d^{-\beta q})},
\end{align*} 
which implies the estimate.

Now, given $F\in L^q(\Omega,d^{-\beta q})$ and $\varepsilon > 0$, let us show that there exists $g_F+\psi_F d^{\beta p}$ in $V$ sufficiently close to $F$. Using again that $d^{\beta p}$ is integrable, we define $\psi_F$ by  
\[\psi_F=\frac{\int_\Omega F}{\int_\Omega d^{\beta p}}.\] 
Then, the function $h_F(x)=F(x)-\psi_F d^{\beta p}$ has a vanishing mean value, but it does not necessarily have a compact support. Thus, let $B$ be an open ball, independent of $\varepsilon$, such that $\overline{B}\subset \Omega$. And, let $\Omega_\varepsilon$ be an open set that contains $B$ such that $\overline{\Omega_\epsilon}\subset \Omega$ and 
\[\left\|(1-\chi_{\Omega_\varepsilon}(x))h_F(x)\right\|_{L^q(\Omega,d^{-\beta q})}<\varepsilon,\]
where $\chi$ denotes a characteristic function. Finally, let show that the following function fulfils the requirements
\[g_F(x)=\chi_{\Omega_\varepsilon}(x)h_F(x)+\dfrac{\chi_B(x)d^{\beta p}(x)}{\int_B d^{\beta p}}\int_{\Omega\setminus \Omega_\varepsilon}h_F.\]
Following some straightforward calculations, it can be seen that $g_F(x)+\psi_F d^{\beta p}(x)$ belongs to $V$. And, by using the H\"older inequality multiple times we conclude the proof of the lemma with the following estimation
 \begin{align*}
    &\|F(x)-g_F(x)-\psi_F d^{\beta p}(x)\|_{L^q(\Omega,d^{-\beta q})}\\ =&\|h_F(x)-g_F(x)\|_{L^q(\Omega,d^{-\beta q})}\\ 
    \leq &\left\|(1-\chi_{\Omega_\varepsilon}(x))h_F(x)-\dfrac{\chi_B(x)d^{\beta p}(x)}{\int_B d^{\beta p}}\int_{\Omega\setminus \Omega_\varepsilon}h_F\right\|_{L^q(\Omega,d^{-\beta q})}\\
    \leq &\left(1+\left(\dfrac{\int_\Omega d^{\beta p}}{\int_B d^{\beta p}}\right)^{1/p}\right)\left\|(1-\chi_{\Omega_\varepsilon}(x))h_F(x)\right\|_{L^q(\Omega,d^{-\beta q})}\\ 
    \leq &\left(1+\left(\dfrac{\int_\Omega d^{\beta p}}{\int_B d^{\beta p}}\right)^{1/p}\right) \varepsilon.
\end{align*} 
\qed
\end{proof}

The importance of this lemma will become evident later, in the proof of the improved Poincaré inequality, which is the first result that is obtained via a duality argument. 

\subsection{The divergence equation}\label{divergence}
In this subsection, we study the problem ${\rm div}\,{\bf u} = f$ in $\Omega$ with boundary condition ${\bf u} =\bm{0}$ on $\partial\Omega$, for certain $f$ such that $\int_\Omega f = 0$. In addition we want to obtain an estimate for the norm of the solution ${\bf u}$ in terms of the datum $f$. The unweighted estimate $\|D {\bf u}\|_{L^{q}(\Omega)}\le C\|f\|_{L^q(\Omega)}$, that is valid on regular domains, cannot hold on H\"older-$\alpha$ domains due to the  singularities on the boundary of $\Omega$. Weighted norms can be used to compensate those singularities, as shown in the following inequality:
\[ \|D{\bf u}\|_{L^q(\Omega,d^{(1-\alpha)q})}\le C\|f\|_{L^q(\Omega)}.\]

Such a result was extended in \cite{DLg}, under certain additional hypothesis. Indeed, in that paper only the planar case is considered, and $\partial\Omega$ is assumed to be included in a $1-$Ahlfors regular set. In this context the following estimate is proven:
\begin{equation}\label{weighted div ineq}
\|D{\bf u}\|_{L^q(\Omega,d^{(1-\beta-\alpha)q})}\le C\|f\|_{L^q(\Omega,d^{-q\beta})},
\end{equation}
where the restrictions $0\le\beta\le 1-\alpha$ and $\beta<\frac{1}{q}$ are imposed on $\beta$. It is important to notice that we have stated \eqref{weighted div ineq} in the same terms of our results to simplify the comparison. We follow the same principle when citing previous results in the next subsections.

Observe that the restrictions on $\beta$ allows the weight to be transferred partially (or totally) to the right hand side. The case $\beta=1-\alpha$ is used to prove well-posedness of the Stokes equations. The estimation \eqref{weighted div ineq} was generalized in \cite{L1} where the restrictions on the dimension on $\Omega$, on the parameter $\beta$, and on the Ahlfors regularity on $\partial\Omega$ were lifted, with the exception of the requirement $\beta\geq 0 $. Our result shows that this restriction can be relaxed, and that it is enough to ask $\beta>-\alpha/p$.

\begin{theorem}\label{Divergence in Holder} Let $\Omega\subset\R^n$ be a bounded H\"older-$\alpha$ domain, and $\beta p>-\alpha$. Given $f\in L^q_0(\Omega,d^{-q\beta})$ such that $\int_\Omega f=0$, there exists a vector field ${\bf u}\in W^{1,q}_0(\Omega,d^{q(1-\alpha-\beta)})^n$,
solution of ${\rm div\,}{\bf u}=f$, that verifies the estimate \eqref{weighted div ineq}.
\end{theorem}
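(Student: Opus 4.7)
The plan is to follow the local-to-global template outlined at the start of Section~\ref{Applications}: decompose the datum $f$ via Theorem~\ref{teo Holder decomp}, solve the divergence problem locally on each patch by a classical (unweighted) Bogovskii-type estimate, reassemble, and use the weight being essentially constant on each patch to convert the local $L^q$ estimates into the desired global weighted estimate.

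First, since $\beta p > -\alpha$, Theorem~\ref{teo Holder decomp} applies to $f$: there is a partition $\{U_t\}_{t\in\Gamma}$ of $\Omega$ (into one smooth piece $\Omega_0$ with positive distance to $\partial\Omega$, together with cubes or cubes stretched by a factor $3/2$ in one direction) and a $\mathcal{C}$-decomposition $f=\sum_{t\in\Gamma} f_t$, with $\supp(f_t)\subset U_t$, $\int_{U_t} f_t=0$, and
\begin{equation*}
\sum_{t\in\Gamma}\int_{U_t}|f_t(x)|^{q}\,d(x)^{(1-\beta-\alpha)q}\,\dx \le C\int_\Omega |f(x)|^{q}\,d(x)^{-\beta q}\,\dx.
\end{equation*}

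Next, on each patch $U_t$ I invoke the classical Bogovskii result: since $U_t$ is a smooth bounded domain or a John (in fact star-shaped with respect to a ball) domain of controlled shape, there exists $\u_t\in W^{1,q}_0(U_t)^n$ solving $\div\,\u_t=f_t$ with
\begin{equation*}
\|D\u_t\|_{L^q(U_t)}\le C\|f_t\|_{L^q(U_t)}.
\end{equation*}
The key point here is the uniformity of the constant $C$: every $U_t$ with $t\in\Gamma^*$ is a scaled copy of one of finitely many reference shapes (a cube or the fixed $3/2$-extended cube), and the Bogovskii constant is scale-invariant in $q$, so $C$ does not depend on $t$. Extending each $\u_t$ by zero produces a vector field in $W^{1,q}_0(\Omega)^n$, and I set $\u=\sum_{t\in\Gamma}\u_t$; the finite-overlap property (item (1) of Definition~\ref{Decomp of Omega}) makes the sum locally finite and shows $\div\,\u=f$ in $\Omega$.

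For the weighted estimate, I exploit that $d(x)\sim d_t$ for every $x\in U_t$ uniformly in $t$, so the weight $d^{(1-\beta-\alpha)q}$ is essentially constant on $U_t$. Combining the overlap bound, the local Bogovskii estimate, and this equivalence,
\begin{align*}
\|D\u\|_{L^q(\Omega,d^{(1-\beta-\alpha)q})}^{q}
&\le N\sum_{t\in\Gamma}\int_{U_t}|D\u_t(x)|^{q} d(x)^{(1-\beta-\alpha)q}\,\dx \\
&\le C\sum_{t\in\Gamma} d_t^{(1-\beta-\alpha)q}\|D\u_t\|_{L^q(U_t)}^{q}
\le C\sum_{t\in\Gamma} d_t^{(1-\beta-\alpha)q}\|f_t\|_{L^q(U_t)}^{q} \\
&\le C\sum_{t\in\Gamma}\int_{U_t}|f_t(x)|^{q} d(x)^{(1-\beta-\alpha)q}\,\dx
\le C\|f\|_{L^q(\Omega,d^{-\beta q})}^{q},
\end{align*}
where the last inequality is precisely \eqref{decomp estim Holder}. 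This gives \eqref{weighted div ineq} and the fact that $\u\in W^{1,q}_0(\Omega,d^{q(1-\alpha-\beta)})^n$.

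I expect the only delicate point to be justifying the uniform Bogovskii constant across the patches $U_t$ and making rigorous the claim that the zero-extensions glue together into a function in $W^{1,q}_0(\Omega)^n$ with $\div\,\u=f$; both are standard but rely on the specific geometry (cubes and $3/2$-extended cubes, plus the smooth interior patch $\Omega_0$) supplied by Theorem~\ref{teo Holder decomp}. The rest is bookkeeping via the decomposition estimate.
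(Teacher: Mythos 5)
Your proposal is correct and follows essentially the same route as the paper: decompose $f$ via Theorem~\ref{teo Holder decomp}, solve $\div\,{\bf u}_t=f_t$ on each patch with a Bogovskii estimate whose constant is uniform by scaling over the cubes (and the fixed smooth piece $\Omega_0$), then sum and use $d(x)\sim d_t$ together with \eqref{decomp estim Holder}. The two points you flag as delicate (uniformity of the constant and gluing the zero-extensions) are exactly the ones the paper addresses, the first by an explicit scaling argument on a reference cube.
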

\begin{proof}
It is known (see, for example, \cite{ADM}), that given $U$ a John domain and $f\in L^q(U)$ with vanishing mean value, then, there exists ${\bf u}\in W^{1,q}_0(U)^n$ such that $\div{\bf u} = f$ and:
\[\|{\bf u}\|_{W^{1,q}(U)}\le C \|f\|_{L^q(U)}.\]
Moreover, a simple scaling argument shows that the result holds with the same constant $C$ for every cube (or, more generally, for every rectangle with a fixed aspect ratio). Indeed, consider $\hat{Q}=(0,1)^n$, the reference cube. For simplicity, we take $Q\subset \mathbb{R}^n$ some other cube, with edges parallels to the coordinate axis and of length $\ell_Q$. We can consider the affine map $F:\hat{Q}\to Q,$ $F(\hat{x}) = \ell_Q \hat{x}+b,$ being $b$ a fixed vertex of $Q$. Then, given $f\in L^p(Q)$ such that $\int_Q f=0$, we define $\hat{f}(\hat{x})=f(F(\hat{x})),$ and $\hat{{\bf u}}$ the solution of $\div_{\hat{x}}\hat{{\bf u}} = \hat{f}$ on $\hat{Q}$. Now, take ${\bf u}(x) = \ell_Q \hat{{\bf u}}(F^{-1}(x))$. We have $\div_x{\bf u} = \ell_Q \div_x\hat{{\bf u}}(F^{-1}(x)) = \ell_Q \frac{1}{\ell_Q}\div_{\hat{x}} \hat{{\bf  u}}(\hat(x)) = \hat{f}(\hat{x}) = f(x)$. The estimate follows in a similar way, with the constant $C$ being the same for $Q$ as for the fixed cube $\hat{Q}$. If the edges of $Q$ are not parallel to the axis, a rotation needs to be included in $F$, but the same idea follows. 

Now, given $\Omega$ a H\"older-$\alpha$ domain, and $f\in L^q(\Omega,d^{-q\beta})$ such that $\int_\Omega f=0$, we consider $\{f_t\}_{t\in\Gamma}$ the decomposition of $f$ given by Theorem \ref{teo Holder decomp}. For each $f_t$, we have a unique solution ${\bf u}_t$ supported on $U_t$ and such that $\div {\bf u}_t = f_t$ with the unweighted estimate: \[\|D {\bf u}_t\|_{L^q(U_t)}\le c \|f_t\|_{L^p(U_t)}.\]
Since every $U_t$, with the possible exception of the the root of $\Gamma$, is a cube, the constant $c$ can be taken independent of $t$. Now, taking ${\bf u} = \sum_{t\in\Gamma} {\bf u}_t$, it is immediate that $\div {\bf u} = f$. Moreover, we can take a constant  $d_t\sim d(U_t,\partial\Omega)$ for each $t$, and: 
 \begin{align*}
    \|D {\bf u}\|_{L^q(\Omega,d^{(1-\alpha-\beta)q})}^q &\le C \sum_{t\in\Gamma} \|D {\bf u}_t\|_{L^q(U_t,d^{(1-\alpha-\beta)q})}^q\\
    &\le C \sum_{t\in\Gamma}d_t^{(1-\alpha-\beta)q}\|D {\bf u}_t\|_{L^q(U_t)}^q \\
    &\le Cc\sum_{t\in\Gamma}d_t^{(1-\alpha-\beta)q}\|f\|_{L^q(U_t)}^q \\
    &\le C \sum_{t\in\Gamma}\int_{U_t} f(x)^q d(x)^{(1-\alpha-\beta)q}\dx \\
    &\le C \int_\Omega f(x)^q d(x)^{-\beta q} \dx = C\|f\|_{L^q(\Omega,d^{-\beta q})}^q, 
\end{align*} 
where in the last step we used \eqref{decomp estim Holder}. 
\qed\end{proof}

\subsection{Improved Poincar\'e inequality}
Improved Poincar\'e inequalities have been largely studied in several contexts. For a H\"older-$\alpha$ domain $\Omega$, in \cite{BoasStraube} (and later in \cite{DMRT}) it is proven that:
\[\|f\|_{L^p(\Omega)}\le C \| \nabla f\|_{L^p(\Omega,d^{\alpha p})},\]
for every $f$ with vanishing mean value on $\Omega$. 

A weighted extension of this result was given in \cite{ADL}, where the authors proved: 
\[\|f\|_{L^p(\Omega,d^{\beta p})}\le C\|\nabla f\|_{L^p(\Omega,d^{(\beta+\alpha)p})},\]
for $\beta$ satisfying $0\le \beta\le 1-\alpha$. We show that this restrictions on $\beta$ can be reduced to the requirement $\beta>-\alpha/p$.

\begin{theorem}\label{improved Poincare}
Let $\Omega$ be a H\"older-$\alpha$ domain for some $0<\alpha\le 1$, and $f\in L^p(\Omega,d^{\beta p})$ for some $\beta p>-\alpha$, such that $\int_\Omega f d^{\beta p}=0$. Then, there is a constant $C$ such that:
\[\|f\|_{L^p(\Omega,d^{\beta p})}\le C\|\nabla f\|_{L^p(\Omega,d^{(\beta+\alpha)p})}.\]
\end{theorem}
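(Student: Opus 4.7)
The plan is to combine a duality argument with the decomposition Theorem~\ref{teo Holder decomp}. By the standard duality characterization of weighted Lebesgue norms,
\[
\|f\|_{L^p(\Omega,d^{\beta p})} = \sup_{h} \Bigl|\int_\Omega f h \dx \Bigr|,
\]
where the supremum runs over $h$ in the unit ball of $L^q(\Omega,d^{-\beta q})$. By Lemma~\ref{lemma density} it suffices to take the supremum over $h = g + \psi d^{\beta p}\in V$, and the hypothesis $\int_\Omega f\, d^{\beta p}=0$ annihilates the $\psi d^{\beta p}$ term, leaving $\int_\Omega f h = \int_\Omega f g$. Also from Lemma~\ref{lemma density}, such a $g$ satisfies $\overline{\supp g}\subset\Omega$, $\int_\Omega g=0$, and $\|g\|_{L^q(\Omega,d^{-\beta q})} \le 2$. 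Hence the proof reduces to bounding $|\int_\Omega fg|$ for these special test functions.

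Next, apply Theorem~\ref{teo Holder decomp} to $g$: this yields a $\mathcal{C}$-decomposition $g=\sum_{t\in\Gamma}g_t$ subordinate to the tree covering $\{U_t\}_{t\in\Gamma}$, with $\int_{U_t}g_t=0$ and
\[
\sum_{t\in\Gamma}\int_{U_t}|g_t|^q d^{(-\beta-\alpha+1)q}\dx \le C\int_\Omega |g|^q d^{-\beta q}\dx.
\]
The orthogonality of each $g_t$ allows one to insert the mean $f_{U_t}:=\frac{1}{|U_t|}\int_{U_t} f$, giving
\[
\int_\Omega f g = \sum_{t\in\Gamma}\int_{U_t}(f-f_{U_t})\,g_t\dx.
\]

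Each $U_t$ is either a cube, a cube extended by the factor $3/2$ in one direction, or the smooth core $\Omega_0$, so a scaling argument provides an \emph{unweighted} Poincar\'e inequality $\|f-f_{U_t}\|_{L^p(U_t)}\le C\ell_t\|\nabla f\|_{L^p(U_t)}$ with $C$ uniform in $t$. Since $d(x)\sim d_t\sim \ell_t$ on $U_t$, the weights can be treated as essentially constant and pulled out of the local norms. Applying H\"older on the pairing $(f-f_{U_t})g_t$, the factors $d_t^{\pm(\beta+\alpha-1)}$ rearrange cleanly, and one obtains
\[
\Bigl|\int_{U_t}(f-f_{U_t})\,g_t\Bigr| \le C\,\|\nabla f\|_{L^p(U_t,d^{(\beta+\alpha)p})}\,\|g_t\|_{L^q(U_t,d^{(-\beta-\alpha+1)q})}.
\]
Summing over $t$ and using the discrete H\"older inequality, the bounded overlap of $\{U_t\}$, and the decomposition estimate above, yields a bound of $C\|\nabla f\|_{L^p(\Omega,d^{(\beta+\alpha)p})}\|g\|_{L^q(\Omega,d^{-\beta q})}\le 2C\|\nabla f\|_{L^p(\Omega,d^{(\beta+\alpha)p})}$. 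Taking the supremum over $h$ closes the argument.

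The principal subtlety is the bookkeeping of weight shifts: the decomposition introduces the shift $\alpha-1$ on the $g_t$ side, while the unweighted Poincar\'e inequality on each cube contributes the extra factor $\ell_t\sim d_t$, and together these reproduce exactly the shift $\alpha$ between $d^{\beta p}$ and $d^{(\beta+\alpha)p}$. The reduction through Lemma~\ref{lemma density} is also crucial, since it converts the \emph{weighted} mean zero assumption on $f$ into the \emph{unweighted} orthogonality needed to apply Theorem~\ref{teo Holder decomp} to the dual variable $g$.
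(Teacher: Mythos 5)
Your proposal is correct and follows essentially the same route as the paper: duality through Lemma \ref{lemma density}, cancellation of the $\psi d^{\beta p}$ term by the weighted mean-zero hypothesis, the $\mathcal{C}$-decomposition of the dual test function $g$ from Theorem \ref{teo Holder decomp}, insertion of $f_{U_t}$ via the local orthogonality, and the unweighted Poincar\'e inequality on each cube with constant proportional to $\ell_t\sim d_t$ to absorb the shift $\alpha-1$ into the shift $\alpha$. The weight bookkeeping and the role of the compact support of $g$ (making the sum finite so it can be exchanged with the integral) are exactly as in the paper's argument.
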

\begin{proof}
We study the norm of $f$ using a duality characterization. Thanks to Lemma \ref{lemma density}, it is enough to consider $h = g+d^{\beta p}\psi\in V$: 
 \begin{align*}
    \|f\|_{L^p(\Omega,d^{\beta p})} &=\sup_{h:\|h\|_{L^q(\Omega,d^{-\beta q})}=1} \int_\Omega f h \\
    &= \sup_{h:\|h\|_{L^q(\Omega,d^{-\beta q})}=1}\int_\Omega f(g+d^{\beta p}\psi)\\
    &=\sup_{h:\|h\|_{L^q(\Omega,d^{-\beta q})}=1}\int_\Omega fg.
\end{align*} 
In the last step, we used that $\int_\Omega f d^{\beta p} = 0$ and $\psi$ is a constant. Now, since $g$ has vanishing mean value, we can apply to it the decomposition of Theorem \eqref{Decomp Thm}:
\[\|f\|_{L^p(\Omega,d^{\beta p})}=\sup_{h:\|h\|_{L^q(\Omega,d^{-\beta q})}=1} = \int_\Omega \sum_{t\in\Gamma}fg_t.\]
Here the necessity of Lemma \ref{lemma density} becomes clear: since the support of $g$ is compact, it intersects only a finite number of sets $U_t$, so the summation is finite and can be pulled out of the integral. Hence, using the $\mathcal{C}-$orthogonality of $g_t$ and the fact that $d(x)\sim d_t$ for $x\in U_t$ we have:
 \begin{align*}
    \int_\Omega fg &= \sum_{t\in\Gamma} \int_{U_t} f g_t = \sum_{t\in\Gamma} \int_{U_t} (f-f_{U_t})g_t\\ 
    &\le \sum_{t\in\Gamma} \|f-f_{U_t}\|_{L^p(U_t,d^{(\beta+\alpha-1)p})}\|g_t\|_{L^q(U_t,d^{(-\beta-\alpha+1)q})}  \\
    &\le \Big(\sum_{t\in\Gamma}\|f-f_{U_t}\|^p_{L^p(U_t,d^{(\beta+\alpha-1)p})})^\frac{1}{p}\Big(\sum_{t\in\Gamma}\|g_t\|^q_{L^q(U_t,d^{(-\beta-\alpha+1)q})}\Big)^\frac{1}{q}\\
    &\le C\Big(\sum_{t\in\Gamma}d_t^{(\beta+\alpha-1)p}\|f-f_{U_t}\|^p_{L^p(U_t)}\Big)^\frac{1}{p}\|g\|_{L^q(\Omega,d^{-\beta q})},
\end{align*} 
where in the last step we used \eqref{decomp estim Holder}.

In order to complete the proof, we recall that, thanks to the estimate in Lemma \ref{lemma density}, $\|g\|_{L^q(\Omega,d^{-\beta q})}\le 2$, and that the Poincar\'e inequality holds on the unweighted case for smooth domains. Moreover, for convex domains the constant is proportional to the the diameter of the domain. In our case, the diameter of each cube $U_t$ is proportional to $d_t$, hence:
 \begin{align*}
    \|f\|_{L^p(\Omega,d^{\beta p})} &\le C %
    \Big(\sum_{t\in\Gamma}d_t^{(\beta-1+\alpha)p}d_t^p\|\nabla f\|_{L^p(U_t)}^p\Big)^\frac{1}{p} \\
    &\le C\Big(\sum_{t\in\Gamma}\|\nabla f\|_{L^p(U_t,d^{(\beta+\alpha)p})}^p\Big)^\frac{1}{p}\\
    &= C \|\nabla f\|_{L^p(\Omega,d^{(\beta+\alpha)p})}
\end{align*} 
\qed\end{proof}

\subsection{Fractional Poincar\'e inequality}

Recently, authors have shown interest in fractional versions of the classical Poincar\'e inequality, for example:

\begin{equation}\label{basic frac Poincare}
\inf_{c\in\R}\|u-c\|_{L^p(U)}\le C\left(\int_U\int_{U\cap B(x,\tau d(x))}\frac{|u(x)-u(y)|^p}{|x-y|^{n+sp}}{\rm d}x{\rm d}y\right)^\frac{1}{p},
\end{equation}
for $\tau\in(0,1)$. The right hand side is similar to the usual seminorm of the fractional Sobolev space $W^{s,p}$ for $0<s<1$ where the double integral is taken over $U\times U$. In fact, both expressions are equivalent for Lipschitz domains  (\cite[equation (13)]{Dyda}). However, if the usual seminorm is taken in \eqref{basic frac Poincare}, it can be seen that the inequality holds for every bounded domain (see, for example \cite[Section 2]{DD}, \cite[Proposition 4.1]{HL}). In particular, it is shown in \cite[Proposition 4.1]{HL} that the constant involved in the inequality is proportional to $diam(U)^{\frac{n}{p}+s}|U|^{-\frac{1}{p}}$. On the other hand, the stronger version \eqref{basic frac Poincare} fails on irregular domains.  Here we prove a weighted improved inequality:

\begin{theorem}
Let $\Omega$ be a H\"older-$\alpha$ domain for some $0<\alpha\le1$, and $u\in W^{s,p}(\Omega,d^{\beta p})$ for some $s\in(0,1)$ and $\beta p> -\alpha$ then, for $\tau\in(0,1)$:
\begin{multline}\label{frac Poincare Holder}
    \inf_{c\in\mathbb{R}}\|u-c\|_{L^p(\Omega,d^{\beta p})}\le \\
    C \left(\int_\Omega\int_{\Omega\cap B(x,\tau d(x))}\frac{|u(y)-u(x)|^p}{|y-x|^{n+sp}}\delta(x,y)^{(s+\beta-\alpha+1)p} {\rm d}x{\rm d}y\right)^\frac{1}{p},
\end{multline}
where $\delta(x,y)=\min\{d(x),d(y)\}.$
\end{theorem}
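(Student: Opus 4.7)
The plan is to mimic the duality-plus-decomposition strategy used in Theorem \ref{improved Poincare}. First I would rewrite the quantity $\inf_c\|u-c\|_{L^p(\Omega,d^{\beta p})}$ by duality as
\[
\sup\Big\{\int_\Omega uh\,\dx\,:\,\|h\|_{L^q(\Omega,d^{-\beta q})}\le 1,\ \int_\Omega h=0\Big\},
\]
and via a density argument analogous to Lemma \ref{lemma density} restrict the supremum to test functions $h$ with $\overline{\supp h}\subset\Omega$ and $\int_\Omega h=0$. For any such $h$, Theorem \ref{teo Holder decomp} produces a $\mathcal{C}$-decomposition $h=\sum_{t\in\Gamma}h_t$ subordinate to the tree covering $\{U_t\}_{t\in\Gamma}$, together with estimate \eqref{decomp estim Holder}. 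Since $h$ is compactly supported, only finitely many $h_t$ are nonzero and all summations commute with integration.

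Using $\int h_t=0$, I may replace $u$ by $u-u_{U_t}$ on each $U_t$, where $u_{U_t}$ denotes the mean of $u$ on $U_t$. A H\"older inequality locally, with weight split $d^{\beta+\alpha-1}\cdot d^{-(\beta+\alpha-1)}=1$, followed by a H\"older inequality over $t\in\Gamma$ in $\ell^p\times\ell^q$, yields
\[
\int_\Omega uh\le\Big(\sum_t\|u-u_{U_t}\|_{L^p(U_t,d^{(\beta+\alpha-1)p})}^p\Big)^{\!1/p}\Big(\sum_t\|h_t\|_{L^q(U_t,d^{(-\beta-\alpha+1)q})}^q\Big)^{\!1/q}.
\]
The second factor is bounded by $C\|h\|_{L^q(\Omega,d^{-\beta q})}\le C$ thanks to \eqref{decomp estim Holder}. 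For the first factor, the admissibility $d(x)\sim d_t$ on $U_t$ lets me pull the weight outside the norm, reducing the problem to estimating $\sum_t d_t^{(\beta+\alpha-1)p}\|u-u_{U_t}\|_{L^p(U_t)}^p$.

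By the structure of the covering, each $U_t$ is either the smooth interior piece $\Omega_0$ or (up to an anisotropic dilation by $3/2$) a cube of edge $\ell_t\sim d_t$. On these Lipschitz model domains the classical fractional Poincar\'e inequality combined with the scaling argument of Theorem \ref{Divergence in Holder} gives
\[
\|u-u_{U_t}\|_{L^p(U_t)}\le C\,\ell_t^{s}\,[u]_{W^{s,p}(U_t)}
\]
with a constant independent of $t$. Inserting this estimate and using $\ell_t\sim d_t\sim\delta(x,y)$ for $(x,y)\in U_t\times U_t$, together with the bounded-overlap property built into the tree covering, converts the sum into a single integral over $\Omega$ of the form appearing in \eqref{frac Poincare Holder}.

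The step I expect to be the main obstacle is reconciling the \emph{full} local seminorm $[u]_{W^{s,p}(U_t)}$, a double integral over all of $U_t\times U_t$, with the \emph{restricted} double integral in \eqref{frac Poincare Holder} whose inner variable is confined to $B(x,\tau d(x))$. One needs either to refine the cubes of Theorem \ref{teo Holder decomp} so that $\diam U_t<\tau\,\essinf_{U_t}d$ and hence $U_t\subset B(x,\tau d(x))$ for every $x\in U_t$, or to invoke a localized fractional Poincar\'e inequality on cubes in which only nearby pairs contribute. Combined with the pointwise control $\delta(x,y)\sim d_t$ on the relevant pairs, either route should recover \eqref{frac Poincare Holder}.
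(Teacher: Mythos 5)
Your outline coincides with the paper's proof in every structural respect: duality via Lemma \ref{lemma density}, the $\mathcal{C}$-decomposition of the test function from Theorem \ref{teo Holder decomp} with estimate \eqref{decomp estim Holder}, a local fractional Poincar\'e inequality on each $U_t$, and reassembly using $d(x)\sim d_t\sim \delta(x,y)$ on $U_t\times U_t$ together with the finite overlap of the covering and the positivity of the integrand. The step you single out as the ``main obstacle'' is precisely where the paper's proof has content beyond Theorem \ref{improved Poincare}, and your second proposed route is the correct one: the paper simply invokes \cite[Proposition 4.2]{HL}, a localized fractional Poincar\'e inequality on a cube $Q$ in which the inner integral is already restricted to $Q\cap B(x,\tau\ell(Q))$, with explicit constant $C_{n,p}\tau^{s-n}\ell(Q)^{s}$. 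Since on the Whitney-type cubes $\ell(U_t)$ is controlled by $d(x)$ for $x\in U_t$, one has $B(x,\tau\ell(U_t))\subset B(x,\tau d(x))$, and enlarging the (nonnegative) double integral from $U_t\times\big(U_t\cap B(x,\tau d(x))\big)$ to $\Omega\times\big(\Omega\cap B(x,\tau d(x))\big)$ finishes the argument; the central piece $U_0$ is handled with the unrestricted \cite[Proposition 4.1]{HL}, which suffices there because $d$ is bounded below on $U_0$. So the missing ingredient in your write-up is a citation, not a new argument, and your version with the full seminorm $[u]_{W^{s,p}(U_t)}$ would indeed \emph{not} yield \eqref{frac Poincare Holder} as stated, since pairs $(x,y)\in U_t\times U_t$ with $|x-y|>\tau d(x)$ are absent from the right-hand side.

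I would advise against your first route (refining the covering so that $\diam U_t<\tau\,\essinf_{U_t}d$): estimate \eqref{decomp estim Holder} is proved for the specific tree covering constructed in Lemma \ref{lemma Holder}, and a $\tau$-dependent refinement would force you to re-verify the discrete Hardy condition \eqref{tree cond} on the new tree and would obscure the dependence of the constant on $\tau$, whereas the localized inequality of \cite{HL} delivers the factor $\tau^{s-n}$ directly. The rest of your reduction is sound, including pulling the weights out by admissibility and keeping the second H\"older factor bounded by \eqref{decomp estim Holder}; the paper works with arbitrary local constants $c_t$ rather than the means $u_{U_t}$, which is an immaterial difference.
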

\begin{proof}
Naturally, it is enough to consider $c=\int_\Omega ud^{\beta p}$. Moreover, we may assume that $\int_{\Omega} u d^{\beta p} = 0.$ 
As usual, writing the norm on the left hand side by duality, via Lemma \ref{lemma density}, applying the decomposition for $g$ and estimate \eqref{decomp estim Holder} yield:
 \begin{align*}
\|u\|_{L^p(\Omega,d^{\beta p})} & =\sum_{t\in\Gamma} \int_{U_t}(u-c_t)g_t \\
&\le \sum_{t\in\Gamma}\|u-c_t\|_{L^p(U_t,d^{p(\beta+\alpha-1)})}\|g_t\|_{L^q(U_t,d^{q(-\beta-\alpha+1)})} \\
&\le C \left(\sum_{t\in\Gamma}\|u-c_t\|_{L^p(U_t)}d_t^{p(\beta+\alpha-1)}\right)^\frac{1}{p},
\end{align*} 
for any set of constants $\{c_t\}_{t\in\Gamma}.$

For completing the proof we invoke \cite[Proposition 4.2]{HL}, that states that for a cube $Q$ with edges of length $\ell(Q)$:
\[\inf_{c}\|u-c\|_{L^p(Q)}\le C_{n,p}\tau^{s-n}\ell(Q)^{s} 
\left(\int_Q\int_{Q\cap B(x,\tau \ell(Q))} \frac{|u(x)-u(y)|^p}{|x-y|^{n+sp}}\right)^\frac{1}{p},\]
for any $\tau\in(0,1)$. We apply this estimate for every cube $U_t$. For the central subdomain $U_0$, we apply \cite[Proposition 4.1]{HL}. Hence, we obtain: 
 \begin{align*}
    \|&u\|_{L^p(\Omega,d^{\beta p})} \\
    &\le C_{n,p}\tau^{s-n}\left(\sum_{t\in\Gamma}\int_{U_t}\int_{U_t\cap B(x,\tau\ell(U_t))}\frac{|u(x)-u(y)|^p}{|x-y|^{n+sp}}\ell(U_t)^{sp}d_t^{p(\beta+\alpha-1)}\right)^\frac{1}{p}.
\end{align*} 
Since $\ell(U_t)\sim d_t\sim d(x)\sim d(y)$ for every $x\in U_t$ and $y\in U_t$, we continue: 
 \begin{align*}
 &\le C_{n,p}\tau^{s-n}\left(\sum_{t\in\Gamma}\int_{U_t}\int_{U_t\cap B(x,\tau d(x))}\frac{|u(x)-u(y)|^p}{|x-y|^{n+sp}}d_t^{p(s+\beta+\alpha-1)}\right)^\frac{1}{p} \\
 &\le C\tau^{s-n}\left(\sum_{t\in\Gamma}\int_{U_t}\int_{ B(x,\tau d(x))}\frac{|u(x)-u(y)|^p}{|x-y|^{n+sp}}\delta(x,y)^{p(s+\beta+\alpha-1)}\right)^\frac{1}{p} \\
 &\le C\tau^{s-n}\left(\int_{\Omega}\int_{ B(x,\tau d(x))}\frac{|u(x)-u(y)|^p}{|x-y|^{n+sp}}\delta(x,y)^{p(s+\beta+\alpha-1)}\right)^\frac{1}{p},
\end{align*} 
which completes the proof.
\qed\end{proof}

This result provides a partial generalization of the one obtained in \cite{DD}. In that paper, a more general form of the inequality is considered, with different exponents $p$ and $q$ on the left and right hand sides, as well as a larger class of domains. However, for technical reasons, when dealing with H\"older-$\alpha$ domains, only the case $\beta = 0$ is considered. Our result is equivalent to \cite[Theorem 5.1]{DD} with $p=q$, but the restriction on $\beta$ is weaker. 

Moreover, \cite[Theorem 5.2]{DD} shows that the shift in the exponent between the left and right hand sides of \eqref{frac Poincare Holder} is optimal. 

\subsection{Korn's inequality}\label{Korn Section}
Given a vector field ${\bf u}\in W^{1,p}(U)^n$,  Korn's inequality states that, 
\begin{equation}\label{basic Korn}\|D{\bf u}\|_{L^p(U)}\le C\|\varepsilon({\bf u})\|_{L^p(U)},
\end{equation}
where $\varepsilon({\bf v}) = \frac{D{\bf v}+D{\bf v}^t}{2}$ is the symmetric gradient. This result fails when $\varepsilon({\bf u})$ vanishes but $D{\bf u}$ does not. Thus, some additional condition on ${\bf u}$ is needed. The so-called \emph{first case} states the inequality when ${\bf u}$ vanishes at the boundary of $U$, and it can be proven using simple arguments, for every bounded domain. We are interested in the \emph{second case} that establishes that \eqref{basic Korn} holds when $\int_U \frac{D{\bf u}-D{\bf u}^t}{2} = 0$. This case requires deeper considerations on the domain and it actually fails for irregular domains. 

A \emph{general case} is also considered in the literature:
\begin{equation}\label{general Korn}
\|D{\bf u}\|_{L^p(U)}\le C\{\|\varepsilon({\bf u})\|_{L^p(U)}+\|{\bf u}\|_{L^p(U)}\},
\end{equation}
which does not need any further assumption on ${\bf u}$. \eqref{general Korn} can be easily derived from the second case of \eqref{basic Korn} (see, for example \cite{BS}). The converse can be proved, for regular domains, using a compactness argument (see \cite{KO})

We prove the following weighted version of \eqref{basic Korn}.

\begin{theorem}\label{Korn}
Let $\Omega$ be a H\"older-$\alpha$ domain for some $0<\alpha\le 1$, and ${\bf u}\in W^{1,p}(\Omega, d^{\beta p})^n$ with $\beta p> -\alpha$, such that $\int_\Omega \frac{D{\bf u}-D{\bf u}^t}{2}d^{\beta p}=0$ then,
\[\|D{\bf u}\|_{L^p(\Omega,d^{\beta p})}\le C \|\varepsilon({\bf u})\|_{L^p(\Omega,d^{(\beta+\alpha-1)p})}.\]
\end{theorem}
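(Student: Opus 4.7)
The plan is to mirror the duality plus decomposition scheme used above for the improved Poincar\'e and fractional Poincar\'e inequalities, replacing the local Poincar\'e inequality by the classical (second case) Korn inequality on cubes. First, I would split $D\u = \varepsilon(\u) + \eta$ with $\eta = \tfrac{1}{2}(D\u - D\u^t)$. Since $\Omega$ is bounded and $\alpha\le 1$, one has $d^{\beta p}\le C\, d^{(\beta+\alpha-1)p}$ pointwise, so
\[
\|\varepsilon(\u)\|_{L^p(\Omega,d^{\beta p})}\le C\,\|\varepsilon(\u)\|_{L^p(\Omega,d^{(\beta+\alpha-1)p})}
\]
trivially, and it only remains to estimate $\|\eta\|_{L^p(\Omega,d^{\beta p})}$.

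For the antisymmetric part I would proceed component by component: each scalar entry $\eta_{ij}$ has vanishing weighted mean value $\int_\Omega \eta_{ij}\,d^{\beta p}=0$ by hypothesis. Dualizing the weighted $L^p$-norm and applying Lemma \ref{lemma density}, it is enough to test $\eta_{ij}$ against elements of $V$ of the form $g_{ij}+\psi_{ij} d^{\beta p}$; the constant contribution $\psi_{ij} d^{\beta p}$ is annihilated by the mean condition on $\eta_{ij}$, so only the compactly supported zero-mean piece $g_{ij}$ matters. I then apply Theorem \ref{teo Holder decomp} to each $g_{ij}$ to obtain a matrix-valued decomposition $G=\sum_{t\in\Gamma} G_t$ with $G_t$ supported on $U_t$, $\int_{U_t}G_t=0$, and the weighted $\ell^p$ control \eqref{decomp estim Holder}.

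The crucial local ingredient is the scale-invariant Korn second-case inequality on each patch. For the constant antisymmetric matrix $W_t:=\tfrac{1}{|U_t|}\int_{U_t}\eta$, applying Korn's inequality to the field $\u-W_t x$ on $U_t$ (whose antisymmetric gradient has zero mean and whose symmetric gradient is still $\varepsilon(\u)$) gives $\|\eta-W_t\|_{L^p(U_t)}\le c\,\|\varepsilon(\u)\|_{L^p(U_t)}$ with $c$ uniform in $t$, since every $U_t$ is either a cube, a cube extended by a factor $3/2$ in one direction, or the fixed smooth subdomain $\Omega_0$. Using the vanishing mean of $G_t$ to replace $\eta$ by $\eta-W_t$ on $U_t$, the equivalence $d(x)\sim d_t$ on $U_t$, H\"older's inequality locally with weights $d^{(\beta+\alpha-1)p}$ and $d^{(-\beta-\alpha+1)q}$, and then the discrete H\"older inequality in $\ell^p\times\ell^q$ followed by \eqref{decomp estim Holder} and the bound from Lemma \ref{lemma density}, I would conclude
\[
\int_\Omega \eta:G \;\le\; C\,\|\varepsilon(\u)\|_{L^p(\Omega,d^{(\beta+\alpha-1)p})}\,\|G\|_{L^q(\Omega,d^{-\beta q})},
\]
which gives the desired estimate after taking the supremum over the admissible test matrices. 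The main obstacle I foresee is the clean handling of the matrix-valued (and in particular antisymmetric) structure throughout the componentwise application of Lemma \ref{lemma density} and Theorem \ref{teo Holder decomp}, together with the verification that Korn's constant is genuinely uniform over the covering, which reduces to scale invariance and the fact that the $U_t$ fall into finitely many shape classes.
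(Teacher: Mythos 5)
Your proposal is correct and follows essentially the same route as the paper: the paper also reduces to the antisymmetric part $\eta_{ij}({\bf u})$, dualizes via Lemma \ref{lemma density}, applies the decomposition of Theorem \ref{teo Holder decomp} to the test function, and invokes the unweighted second-case Korn inequality with a uniform constant on the cubes (via the convex-domain result of \cite{D}) and on the central Lipschitz piece (via \cite{DM}). Your explicit subtraction of the constant antisymmetric matrix $W_t$ is exactly the step the paper leaves implicit when it says to follow the proof of Theorem \ref{improved Poincare} verbatim.
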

\begin{proof}
Observe that if we denote $\eta({\bf u}) = \frac{D{\bf u}-D{\bf u}^t}{2}$, $D{\bf u} = \varepsilon({\bf u})+\eta({\bf u}),$ so it is enough to prove the estimate for the elements $\eta_{i,j}({\bf u})$ of the matrix $\eta({\bf u})$, that have vanishing weighted mean value. The estimate is obtained by following step by step the proof of Theorem \ref{improved Poincare} so we only give references for the needed unweighted inequalities. The norm of $\eta_{i,j}({\bf u})$ is characterized by duality via Lemma \ref{lemma density}. The unweighted estimate \eqref{basic Korn} is known to hold for convex domains with a constant $C$ proportional to the ratio between the diameter of $U$ and the diameter of a maximal ball contained in $U$ (see \cite{D}). Hence, a universal constant can be taken for every cube $U_t$. On the other hand, for the central subdomain $U_0$, we can apply \cite[Corollary 2.2]{DM} where it is shown that \eqref{basic Korn}  holds on domains of Jones, which include Lipschitz domains.
\qed\end{proof}

This generalizes \cite[Theorem 3.1]{ADL} and \cite[Theorem 2.1]{DLg}, where a similar result is proven, but only for $0\le \beta\le 1-\alpha$. In both cases the result is stated in the form of \eqref{general Korn} but it is derived from the second case. In \cite{ADLg} a counterexample is given that shows that the shift $\alpha-1$ between the exponents on the left and right hand sides is optimal. 

\appendix
\section{Proof of Theorem \ref{ehp theorem}}
 
We derive the discrete result from a continuous analogous proven in \cite{EHP}. We begin by obtaining another equivalent form for the Hardy-type inequality: 
\begin{lemma}
Inequalities \eqref{dHardy dual general} and \eqref{dHardy primal} are equivalent (with the same constant $C$) to:
\begin{equation}\label{dHardy primal left}
\left(\sum_{s\in\Gamma^*}\bigg(v_s\sum_{a\prec t\preceq s}F_t u_t^{-1}\bigg)^p\right)^\frac{1}{p} \le C \left(\sum_{s\in\Gamma^*}F_s^p\right)^\frac{1}{p},
\end{equation}
for every ${\bm F} = \{F_t\}_{t\in\Gamma}\in \ell_p(\Gamma)$.
\end{lemma}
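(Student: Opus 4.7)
The plan is to observe that \eqref{dHardy primal left} is obtained from \eqref{dHardy primal} by the simple substitution $F_t = d_t u_t$ (equivalently $d_t = F_t u_t^{-1}$). Since $u_t>0$ for every $t\in\Gamma^*$, this map is a bijection between $\Gamma^*$-sequences ${\bm d}$ satisfying ${\bm d}{\bm u}\in\ell^p(\Gamma^*)$ and arbitrary $\Gamma^*$-sequences ${\bm F}\in\ell^p(\Gamma^*)$, and it preserves the relevant norms: the right-hand side of \eqref{dHardy primal} becomes $\|{\bm F}\|_p$, while on the left-hand side $\sum_{a\prec t\preceq s}d_t$ becomes $\sum_{a\prec t\preceq s}F_t u_t^{-1}$, yielding exactly \eqref{dHardy primal left}.

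First I would make this substitution explicit and verify that the transformation is a bijection between the admissible classes of sequences appearing on each side. Then I would check that after substitution the two inequalities coincide term by term, so they are simultaneously valid and share the same optimal constant $C$. Combined with the equivalence between \eqref{dHardy primal} and \eqref{dHardy dual general} already established in the previous lemma, this chain shows that all three inequalities are equivalent with the same constant.

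There is essentially no obstacle here: the only small point to be careful about is the domain of quantification, namely that ${\bm d}{\bm u}\in\ell^p(\Gamma^*)$ translates exactly to ${\bm F}\in\ell^p(\Gamma^*)$ under the substitution, so the class of test sequences matches on both sides. Since both inequalities are homogeneous and pointwise identical after the change of variable, the optimal constants agree, which completes the statement of the lemma.
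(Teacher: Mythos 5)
Your proposal is correct and matches the paper's proof, which obtains \eqref{dHardy primal left} from \eqref{dHardy primal} by exactly the change of variables $F_s = d_s u_s$. The extra care you take in checking that the substitution is a bijection between the admissible test classes is a sound (if brief in the paper) point, but the argument is the same.
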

\begin{proof}
\eqref{dHardy primal left} is obtained from \eqref{dHardy primal} by changing variables  $F_s=d_s u_s$.
\qed\end{proof}

We derive conditions for \eqref{dHardy primal left} to hold from the continuous case. Let $G_c=(\Gamma_c,E_c)$ be a continuous tree with root $a$. By continuous, we mean that the edges in $E_c$ are segments in the plane, with a certain length. In \cite{EHP}, the authors study the operator $T:L^{p_1}(G)\to L^{p_2}(G)$ given by:
\[T(f) = \eta(x) \int_a^x f(y)\mu(y) dy.\]
Where $\mu$ and $\eta$ are weights and the integral is taken along the path that connects the root $a$ with the point $x$, that could lie anywhere in $G_c$. Here we are only interested in the case $p_1=p_2=p$, so we consider $\mu\in L^{q}_{loc}(G_c)$ and $\eta\in L^p(G_c)$, but the same ideas could be applied to the general case. 
$T$ is continuous in $L^p(G_c)$ if and only if:
\[C_c = \sup_{f\in L^p} \frac{\left(\int_{G_c} \left(\eta(x)\int_a^x f(y) \mu(y) dy\right)^p dx\right)^\frac{1}{p}}{\left(\int_{G_c} f(x)^p dx\right)^\frac{1}{p}}<\infty.\]
Given a sub-tree $K$, we denote $\partial K$ the boundary of $K$. We say that $x\in \partial K$ is maximal if every point $y\succ x$ does not belong to $K$. We define $\mathcal{K}_c$ the set of all sub-trees of $\Gamma$ containing $a$ and such that every boundary point is maximal. We define: 
\[B_c = \sup_{K\in\mathcal{K}_c}\frac{\left(\int_{\Gamma\setminus K}\eta(x)^p dx\right)^\frac{1}{p}}{\alpha_{c,K}},\]
where 
\[\alpha_{c,K}= \inf\Big\{\|f\|_p:\; \int_a^t |f(x)|\mu(x) {\rm d}x,\;\forall t\in\partial K\Big\}\]
Now, we can state the main result of \cite{EHP}, namely:
\begin{theorem}\label{Hardy cont} $T$ is continuous if and only if $B_c<\infty$. Moreover:
\[B_c\le C_c\le 4B_c\]
\end{theorem}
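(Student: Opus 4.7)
The plan is to prove the two-sided bound $B_c \le C_c \le 4B_c$ by treating the necessity and sufficiency directions separately.

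For the necessity $B_c \le C_c$, I would fix $K \in \mathcal{K}_c$ and $\varepsilon>0$, and choose a near-optimal nonnegative test function $f_\varepsilon$ supported on $K$ with $\|f_\varepsilon\|_p \le \alpha_{c,K}+\varepsilon$ and $\int_a^t f_\varepsilon\mu \ge 1$ for every $t \in \partial K$. Because every boundary point of $K$ is maximal by definition of $\mathcal{K}_c$, the unique path from $a$ to any point $x \in \Gamma_c \setminus K^\circ$ must traverse some $t \in \partial K$, and hence $\int_a^x f_\varepsilon\mu \ge 1$. Therefore $Tf_\varepsilon(x) \ge \eta(x)$ on $\Gamma_c \setminus K^\circ$, which gives $\|Tf_\varepsilon\|_p \ge \|\eta\|_{L^p(\Gamma_c \setminus K^\circ)}$; dividing by $\|f_\varepsilon\|_p$, sending $\varepsilon \to 0$, and taking the supremum over $K$ yields $B_c \le C_c$.

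For the sufficiency $C_c \le 4B_c$, I would use a Muckenhoupt-style dyadic decomposition of the tree adapted to a given $f \ge 0$. Define the monotone quantity $G(x) = \int_a^x f(y)\mu(y)\,{\rm d}y$ and, for each $k \in \mathbb{Z}$, put $K_k = \{x \in \Gamma_c : G(x) \le 2^k\}$. Monotonicity of $G$ along paths and its continuity as an integral ensure that $K_k$ is a subtree containing $a$ whose boundary points satisfy $G = 2^k$ and are maximal, so $K_k \in \mathcal{K}_c$. The annular region $K_{k+1}\setminus K_k^\circ$ is characterised by $2^k \le G \le 2^{k+1}$, so $Tf = \eta G \le 2^{k+1}\eta$ there, and
\[\int_{\Gamma_c}(Tf)^p = \sum_{k\in\mathbb{Z}} \int_{K_{k+1}\setminus K_k^\circ}(Tf)^p \le \sum_{k\in\mathbb{Z}} 2^{(k+1)p}\int_{\Gamma_c \setminus K_k^\circ}\eta^p \le B_c^p \sum_{k\in\mathbb{Z}} 2^{(k+1)p}\alpha_{c,K_k}^p.\]
The key step, and where the constant $4$ ultimately comes from, is the refined estimate $\alpha_{c,K_k} \le 2^{-(k-1)}\|f\|_{L^p(K_k \setminus K_{k-1})}$. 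For any $t \in \partial K_k$, continuity of $G$ forces the path from $a$ to $t$ to cross $\partial K_{k-1}$ at some $s$ with $G(t)-G(s) = 2^{k-1}$, so $g = f\chi_{K_k\setminus K_{k-1}}/2^{k-1}$ satisfies $\int_a^t g\mu = 1$ and is admissible in the infimum defining $\alpha_{c,K_k}$. Combining this with the factor identity $2^{(k+1)p} \cdot 2^{-(k-1)p} = 4^p$ and the telescoping $\sum_k \|f\|_{L^p(K_k \setminus K_{k-1})}^p = \|f\|_p^p$ produces $\|Tf\|_p \le 4B_c\|f\|_p$.

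The main obstacle is making this dyadic machinery rigorous in the continuous tree setting: one must verify that $G$ is sufficiently regular (continuous and monotone) along every branch so that $K_k \in \mathcal{K}_c$, handle degenerate behavior where $\mu$ vanishes or $G$ becomes constant on subsegments, justify the reduction to $f \ge 0$ and the disjointness of the annuli, and be careful with the range of $k$ (which may be two-sided infinite). Once these measure-theoretic technicalities are settled, the two bounds combine to give $B_c \le C_c \le 4B_c$.
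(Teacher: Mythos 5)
The paper offers no proof of Theorem \ref{Hardy cont} at all --- its ``proof'' is the single line ``See \cite[Theorem 3.1]{EHP}'' --- so your argument necessarily goes beyond the manuscript; it is, however, essentially the standard level-set proof of such Hardy characterizations and is close in spirit to the argument in \cite{EHP} itself. Both halves check out. For $B_c\le C_c$: a near-optimal $f_\varepsilon\ge 0$ for $\alpha_{c,K}$ satisfies $\int_a^x f_\varepsilon\mu\ge 1$ for every $x$ outside $K$, since the path to such $x$ passes through a boundary point of $K$, so $Tf_\varepsilon\ge\eta$ off $K$ and the bound follows. For $C_c\le 4B_c$: with $G(x)=\int_a^x f\mu$ and $K_k=\{G\le 2^k\}$, every $t\in\partial K_k$ has $G(t)=2^k$ by continuity, the path from $a$ to $t$ meets $\partial K_{k-1}$ where $G=2^{k-1}$, and hence $g=f\chi_{K_k\setminus K_{k-1}}/2^{k-1}$ gives $\int_a^t g\mu=(2^k-2^{k-1})/2^{k-1}=1$ exactly, so $g$ is admissible for $\alpha_{c,K_k}$; the bookkeeping $2^{k+1}\cdot 2^{-(k-1)}=4$ and the telescoping of $\|f\|^p_{L^p(K_k\setminus K_{k-1})}$ then yield the constant $4$. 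The one flagged obstacle that is a genuine (though repairable) gap is the claim $K_k\in\mathcal{K}_c$: if $f\mu$ vanishes identically on one branch emanating from a point $t$ with $G(t)=2^k$ while $G$ exceeds $2^k$ on another, then $t\in\partial K_k$ is not maximal. This is fixed either by perturbing $f$ so that $G$ is strictly increasing, or by pruning from $K_k$ the branches on which $G$ is constantly equal to $2^k$: the pruned tree lies in $\mathcal{K}_c$, its complement contains $\Gamma_c\setminus K_k^\circ$ so the inequality still points the right way, and the same test function $g$ remains admissible. Modulo that repair and the routine measurability points you list, the proposal is a correct proof.
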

\begin{proof}
See \cite[Theorem 3.1]{EHP}.
\qed\end{proof}

Finally, we can prove the theorem:

\begin{proof}[Proof of Theorem \ref{ehp theorem}]\label{Teo dHardy primal left}
We prove that given $G=(\Gamma,E)$ a discrete tree, ${\bm v}=\{v_t\}_{t\in\Gamma^*}$ and ${\bm u}=\{u_t\}_{t\in\Gamma^*}$ positive weights, inequality \eqref{dHardy primal left} holds for every ${\bf F} = \{F_t\}_{t\in\Gamma^*}$ if and only if: 
\[B = \sup_{K\in \mathcal{S}_a} \frac{\left(\sum_{t\in K'} v_t^p\right)^{\frac{1}{p}}}{\alpha_{K}},\]
Moreover $B\le C\le 4B$ 

In order to apply Theorem \ref{Hardy cont}, we build a continuous tree $G_c$ from $G$ by assigning each edge in $E$ a length of $1$. Take $0<\gve<1$. We denote $(s-\gve,s)$ the points in the edge $(s_p,s)$ that are at a distance less than $\gve$ from $s$. For each $s\in\Gamma^*$, we take $\varphi_\gve^s$ a function such that: 
\[\textrm{supp}(\varphi_\gve^s)\subset(s-\gve,s),\quad\quad \int_{s-\gve}^s\varphi_\gve^s(x)^p {\rm d}x = 1.\]
Now we complete the setting for the continuous problem by defining, for $z\in (s_p,s)$ the functions: 
\begin{itemize}
  \item $f(z) = F_s$,
  \item $\mu(z) = u_s^{-1}$,
  \item $\eta_\gve(z) = v_s \varphi_\gve^s(z)$
\end{itemize}

In the definition of $C_c$ we take $\eta_\gve$ and $\mu$ as the weights and the supremum over all functions that are constant on each edge and denote the result $C_\gve$. Analogously, $B_\gve$ is $B_c$ with $\eta_\gve$ and $\mu$. We prove that $C_\gve \to C$ and $B_\gve\to B$ when $\gve\to 0^+$. 

First, for $C$, observe that we can assume without loss of generality that $F_t$ (hence, $f$) is positive. Then: 
 \begin{align*}
\int_{s_p}^s &\left(\eta_\gve(x) \int_a^x f(y) \mu(y) \dy\right)^p \dx = \int_{s-\gve}^s\left(\eta_\gve(x) \int_a^x f(y) \mu(y) \dy \right)^p \dx \\
&\geq \int_{s-\gve}^s v_s^p \varphi_\gve^s(x)^p \left(\sum_{a\prec t\prec s} F_t u_t^{-1} + (1-\gve) F_s u_s^{-1}\right)^p \dx \\
 &\geq v_s^p \left(\sum_{a\prec t\preceq s} F_t u_t^{-1}\right)^p (1-\gve)\int_{s-\gve}^s\varphi_\gve^s(x)^p \dx\\
 &= (1-\gve) v_s^p \left(\sum_{a\prec t\preceq s} F_t u_t^{-1}\right)^p
\end{align*} 
On the other hand: 
 \begin{align*}
\int_{s_p}^s &\left(\eta_\gve(x) \int_a^x f(y) \mu(y) \dy\right)^p \dx \le \int_{s-\gve}^s \left(\eta_\gve(x) \int_a^s f(y) \mu(y) \dy\right)^p \dx \\
&\le \int_{s-\gve}^s v_s^p \varphi_\gve^s(x)^p\left(\sum_{a\prec t\preceq s} F_t u_t^{-1}\right)^p \dx  
= v_s^p\left(\sum_{a\prec t\preceq s} F_t u_t^{-1}\right)^p
\end{align*} 
Finally, observe that
\[\int_{s_p}^s f(x)^p dx = F_s^p.\]
Hence, we have: 
\[(1-\gve) C\le C_\gve \le C,\]
which proves that $C_\gve\to C$, when $\gve\to 0^+$. 

For $B$, take $\delta>0$ and $K\in\mathcal{K}$ such that 
\[\frac{\left(\sum_{s\in \Gamma\setminus K^\circ} v_s^p\right)^{\frac{1}{p}}}{\alpha_{K}}>B-\delta.\]
Consider $K_\gve$ the continuous subtree obtained by removing from $K$ the points that are at a distance less than $\gve$ from its leaves. Then:
 \begin{align*}
\int_{G\setminus K_\gve} \eta_\gve(x)^p \dx &= \sum_{s\in \Gamma\setminus K^\circ} \int_{s-\gve}^s v_s^p\varphi_\gve^s(x)^p \dx = \sum_{s\in \Gamma\setminus K^\circ} v_s^p
\end{align*} 

In a similar way, it is easy to see that:
\[\lim_{\gve\to 0^+} \alpha_{c,K_\gve}= \alpha_K\]
Moreover, we have that f $K_1,K_2\in\mathcal{S}_a$ and $K_1\subset K_2$, then $\alpha_{K_1}\geq\alpha_{K_2}$. In particular, this means $\alpha_{c,K_\gve}\geq\alpha_{c,K}=\alpha_K$. This implies:
\[B_\gve \le B\quad \textrm{ and }\quad \lim_{\gve\to 0^+} B_\gve >B-\delta.\]
Since this can be done for every $\delta>0$, we have: $B=B_c$.
\qed\end{proof}

%\section*{Acknowledgements}

\bibliographystyle{unsrt}

\end{document}